\documentclass[12 pt]{amsart}
\newtheorem{theorem}{Theorem}[section]

\newtheorem{proposition}[theorem]{Proposition}

\newtheorem{definition}[theorem]{Definition}

\theoremstyle{definition}
\newtheorem{remark}[theorem]{Remark}
\newtheorem{example}[theorem]{Example}
\numberwithin{equation}{section}
\parindent =0.cm
\newcommand{\CC}{\mathbb{C}}

\newcommand{\ZZ}{\mathbb{Z}}
\newcommand{\DD}{\mathbb{D}}

\newcommand{\NN}{\mathbb{N}}
\newcommand{\bV}{\mathbf{V}}
\newcommand{\bE}{\mathbf{E}}
\newcommand{\bF}{\mathbf{F}}
\newcommand{\bS}{\mathbf{S}}
\newcommand{\bP}{\mathbf{P}}
\newcommand{\cH}{\mathcal{H}}
\newcommand{\cM}{\mathcal{M}}

\DeclareMathOperator{\spa}{span}

\DeclareMathOperator{\ran}{ran}
\title[ Rational DA functions on a rhombic lattice]{Rational discrete analytic functions on a rhombic lattice}

\author[D.  Alpay]{Daniel Alpay}
\address{(DA)
Faculty of Mathematics, Physics, and Computation\\
Schmid College of Science and Technology\\
Chapman University\\
One University Drive\\
Orange, CA 92866\\
USA}
\email{alpay@chapman.edu}

\author[Z. Kazi]{Zubayir H. Kazi}
\address{(ZK)
Department of Mathematics\\
West Valley College\\
14000 Fruitvale Avenue\\
Saratoga, CA 95070\\
USA}
\email{zubairkazi10@gmail.com}

\author[M. Tecalero]{Mariana Tecalero}
\address{(MT)
College of Science, Technology, Engineering, Arts, and Mathematics\\
Alvernia University\\
400 Saint Bernardine Street\\
Reading, PA 19607\\
USA}
\email{marianatecalero@gmail.com}

\author[D.  Volok]{Dan Volok}
\address{(DV) Department of Mathematics\\ Kansas State University\\ 138 Cardwell Hall\\1228 N.  17th Street\\
  Manhattan, KS 66506\\ USA}
\email{danvolok@math.ksu.edu}

\begin{document}
\dedicatory{Dedicated to Palle Jorgensen on the occasion of his 75th birthday}
\maketitle
\begin{abstract} A novel basis of discrete analytic polynomials on a rhombic lattice is introduced and the associated convolution product is studied. A class of discrete analytic functions that are rational with respect to this product is also described.
\end{abstract}
\mbox{}\\

\noindent AMS Classification: 30G25 

\noindent {\em Keywords:} Discrete analytic functions, rational function

\date{today}
\section{Introduction}
Discrete complex analysis has drawn much attention in the recent years; see for instance \cite{lovasz} and \cite{smirnov1}. The first advances in this field are associated with the works of J. Ferrand  and R. J. Duffin (\cite{Ferrand}, \cite{Duffin}) who studied certain discretizations of complex analytic functions on the integer lattice in the complex plane $\CC.$ From the beginning, one of the basic issues that arose in the investigation of discrete analytic (DA for short) functions was that the usual point-wise product of functions does not preserve discrete analyticity. In spite of this, DA polynomials are well enough understood and several  different  bases of DA polynomials on the integer lattice can be found in the literature. Here, in addition to the above-cited works, one should mention the work of D. Zeilberger  \cite{zeil} who first constructed a basis suitable for "power" series expansions.

The situation with rational DA functions is more complicated. In \cite{MR0052526} R. Isaacs (who used a slightly different notion of discrete analyticity) formulated a conjecture that every rational DA function is a polynomial.  Isaacs' conjecture was ultimately proved false by Harman \cite{MR0361111}.  However,  these works indicate that the class of DA functions, that are rational in the sense of the point-wise product, is not rich enough for applications. 

Another notion of DA rationality for functions on the integer lattice in $\CC$  is  based on the Cauchy - Kovalevskaya extension of the point-wise product from the real axis.  It was introduced and studied in \cite{ajsv}. This approach was inspired by  \cite{jfa}, which dealt with another case of analyticity being incompatible with the point-wise product --  hyperholomorphic functions of the quaternionic variable.

In more recent works \cite{av2022} and \cite{av2023}, the convolution product $\odot$ of DA polynomials on the integer lattice with respect to a certain "standard" basis $z^{(n)}$ was considered instead.  Although this setup proved to be more fruitful, it is heavily dependent on the properties of the elementary difference operators
$$\delta_x f(z)=f(z+1)-f(z),\quad \delta_y f(z)=f(z+i)-f(z).$$
The main goal of the present paper is to extend the notion of rationality  to the case of a rhombic lattice, where the difference operators do not seem to be very helpful. 
As it turns out,  the forward shift operator on the space of DA polynomials can be adapted to the rhombic lattice case much more easily. 

One of the main ingredients 
is that every rational matrix-valued function  $f(z),$ analytic in a neighborhood of the origin, can be  represented as
$$f(z)=D+zC(I-zA)^{-1}B,$$
where $A,B,C,D$ are matrices of suitable dimensions. Such representations, called {\em realizations}, arose in linear system theory (see \cite{kalman}) and proved quite useful in the study of rational matrix-functions (e.g. \cite{bgk1}).

The paper is organized as follows. Section \ref{prelim} provides the background on  discrete analyticity on rhombic lattices, most of which goes back to \cite{rhombic} and
\cite{kenyon}. In Section \ref{shiftsec} the forward and backward shift operators are introduced. Investigation of their properties  (Theorem \ref{fwdthm}, Proposition \ref{bwdprop}) justifies the terminology: they do resemble the shift operators in the complex analytic case; see Remark \ref{shiftrem}. Section \ref{DAP} contains the construction of the eigenfunction of the backward shift operator (Theorem \ref{genfunthm}). A power series expansion of this eigenfunction in terms of a continuous parameter generates "pseudo-powers" of $z$ that serve as a DA polynomial basis. Finally, in Section \ref{ratsec} the main definitions and results are presented. The convolution product
$\odot$ of DA polynomials is set up in Definition \ref{convoproddef}, and Theorem \ref{main} states that $\odot$ -quotients of DA polynomials are precisely those DA functions that admit realizations, thus justifying the notion of a "rational DA function." The relationship between rational DA functions and rational functions of the complex variable is formulated in Theorem \ref{ratmap}. This relationship leads to explicit formulas for products and inverses of rational DA functions in terms of realizations. A characterization of rational DA functions in terms of shift-invariance is stated as Theorem \ref{shiftcar}. Finally, an example of a rational DA function $K_w(z)$ is given (Example \ref{rk-ex}). 
This rational function is a positive reproducing kernel that gives rise to a Hilbert space of DA functions, to be investigated in a forthcoming publication.

\section{Preliminaries}\label{prelim}
In what follows, $\Lambda$ is a rhombic lattice in the complex plane $\CC$ -- a monohedral tessellation of $\CC$ with  unit rhombi. The sets of vertices, edges and faces of $\Lambda$ are denoted by $\bV(\Lambda),$ $\bE(\Lambda)$ and $\bF(\Lambda),$ respectively. A {\em track} in $\Lambda$ is a sequence $(F_n)_{n\in\ZZ}$ in $\bF(\Lambda),$ such that for every $n\in\ZZ$  faces $F_n$ and $F_{n+1}$ are adjacent and, furthermore,  the edges shared by $F_n$ with its neighbors $F_{n+1}$ and $F_{n-1}$ form two opposite sides of the rhombus $F_n.$ These shared edges are referred to as {\em ties} of the track, and the rest are called {\em rails} of the track. Note that any two ties of a track are parallel. The same cannot be said, in general, about rails  of  a track.

The set of directions of the edges of $\Lambda$ is denoted by $\hat{\bE}(\Lambda):$
$$\hat{\bE}(\Lambda)=\{b-a:(a,b)\in \bE(\Lambda)\}.$$

\begin{proposition}\label{finder}
 $\hat{\bE}(\Lambda)$ is a finite set.
\end{proposition}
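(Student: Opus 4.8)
The plan is to reduce the statement to a claim about the directions of the \emph{ties} of tracks, and then to extract finiteness from the monohedral hypothesis together with the angle condition at vertices. Since every edge of $\Lambda$ has unit length, each element of $\hat{\bE}(\Lambda)$ is a point of the unit circle, and since $b-a$ and $a-b$ always occur together it is equivalent to bound the number of \emph{axes}, i.e.\ directions modulo $\pi$. First I would record the elementary but useful observation that every interior edge is shared by exactly two faces and is therefore a tie of a unique track; as the ties of a track are all parallel, the axis of an edge coincides with the common axis of the ties of its track. Thus it suffices to show that only finitely many tie-axes occur.

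Next I would propagate a single reference direction through the whole lattice. Fix an edge with axis $\theta_0$. Because $\Lambda$ is monohedral, every face is congruent to one fixed rhombic prototile with acute angle $\alpha$, so the two axes of any face differ by $\alpha \pmod{\pi}$, and at any vertex the angular gap between two consecutive edges is a sum of angles $\alpha$ and $\pi-\alpha$. Since the $1$-skeleton of $\Lambda$ is connected, walking along edges and turning around vertices shows (using $\pi-\alpha\equiv-\alpha\pmod{\pi}$) that every edge-axis is congruent to $\theta_0+k\alpha\pmod{\pi}$ for some $k\in\ZZ$. Hence, modulo $\pi$, the set $\hat{\bE}(\Lambda)$ is contained in the arithmetic progression $\{\theta_0+k\alpha:k\in\ZZ\}$.

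If $\alpha$ is a rational multiple of $\pi$, this progression is already finite modulo $\pi$ and we are done. To handle the irrational case I would use the angle-sum identity $p\alpha+q(\pi-\alpha)=2\pi$ that must hold at every vertex, where $p,q\ge 0$ count the acute and obtuse corners meeting there. Rewriting it as $(p-q)\alpha=(2-q)\pi$ shows that if some vertex has $p\ne q$, then $\alpha$ is a rational multiple of $\pi$; the only way to avoid this at \emph{every} vertex is to have $p=q=2$ throughout, so that every vertex has degree $4$ with exactly two acute and two obtuse corners.

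The remaining, genuinely delicate, case is this balanced degree-$4$ regime with $\alpha$ irrational, and here the track structure must do the real work. The point I expect to be the main obstacle is to rule out the appearance of the \emph{entire} progression $\theta_0+k\alpha\pmod{\pi}$: whenever a track bends, i.e.\ its rails fail to be parallel, a crossing track can acquire a tie-axis shifted by a further $\pm\alpha$, and an unbounded cascade of such bends would manufacture infinitely many axes. I would argue that in the balanced degree-$4$ regime this cascade is globally obstructed, so that the index $k$ actually realized by edges stays bounded and the tracks run in only finitely many directions. Converting the local angle bookkeeping of the previous paragraph into this global boundedness statement about the whole tessellation is the crux of the argument; once it is in hand, $\hat{\bE}(\Lambda)$ is finite in all cases.
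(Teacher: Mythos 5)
Your opening steps track the paper's proof closely and are correct: the propagation argument showing that, modulo $\pi$, every edge-axis lies in the progression $\{\theta_0+k\alpha : k\in\ZZ\}$ settles the case $\alpha/\pi\in\mathbb{Q}$ exactly as the paper does, and your angle-sum identity $(p-q)\alpha=(2-q)\pi$ at each vertex is actually \emph{more} detailed than the paper, which merely asserts that in the irrational case every vertex has degree $4$. The problem is what comes after: in the irrational case you stop precisely where the real work begins. You state that the ``cascade'' of new axes ``is globally obstructed'' and that converting the local bookkeeping into a global boundedness statement ``is the crux of the argument,'' but you give no argument for it. This is a genuine gap, not a presentational one: the balanced degree-$4$ condition by itself does not forbid a vertex whose four corners occur in the cyclic pattern $\alpha,\alpha,\pi-\alpha,\pi-\alpha$; such a vertex carries edges on three consecutive axes $\theta,\theta+\alpha,\theta+2\alpha$, and no purely local constraint prevents this from iterating so as to realize arbitrarily many terms of the progression. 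So some global structural fact about the tessellation is indispensable, and the proposal does not contain one.

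The fact the paper invokes to close this case is structural: if $\alpha/\pi$ is irrational, then $\Lambda$ is formed by parallel translations of a single track in the direction of its ties. Granting this, finiteness is immediate, since all ties then share one axis $\theta$, and every face, being a unit rhombus congruent to the prototile with one pair of sides on the axis $\theta$, must have its other pair of sides on the axis $\theta+\alpha$ or $\theta-\alpha$; hence at most three axes and at most six elements of $\hat{\bE}(\Lambda)$. If you want to make your own write-up self-contained, the track combinatorics you began to set up is the right vehicle: two distinct tracks of $\Lambda$ cross in at most one face, a crossing face forces the two tie axes to differ by exactly $\pm\alpha\pmod{\pi}$, and with $\alpha/\pi$ irrational three pairwise-crossing tracks would force $2\alpha\equiv\pm\alpha$ or $0\pmod{\pi}$, a contradiction; from this one argues that only two parallel families of tracks exist, which is the paper's assertion. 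Some statement of this kind must be proved (or at least explicitly cited), or the irrational case --- and with it the proposition --- remains unproven.
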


\begin{proof} The case of a square lattice is trivial. If the faces of $\Lambda$ are not squares, denote by $\alpha$ the radian measure of the acute angle in each of the rhombic faces of $\Lambda.$ If $\alpha/\pi$ is a rational number, say $$\dfrac{\alpha}{\pi}=\dfrac{m}{n},\quad\text {where}\quad m,n\in\NN,$$ then connectedness of $\Lambda$ implies that for a suitable $\alpha_0\in\mathbb{R}$
$$\hat{\bE}(\Lambda)\subset\{e^{i\pi(\alpha_0+k)/n}:k=0,1,\dots,2n-1\}.$$
If $\alpha/\pi$ is irrational, then every vertex of $\Lambda$ necessarily has degree $4,$ and lattice $\Lambda$ is formed by parallel translations of a single track in the direction of its ties, much like in the square lattice case.
\end{proof}

Let $a,b\in \bV(\Lambda).$ A path in $\Lambda$ from 
$a$ to $b$ is a finite sequence $(z_0,z_1,\dots,z_N)$ of vertices of $\Lambda,$ where $z_0=a,$ $z_N=b,$ and  for $n=1,\dots,N$
the vertices $z_{n-1}$ and $z_n$ are adjacent in $\Lambda.$  A path from $a$ to $b$ is closed if $a=b.$ 

Given a function $f:\bV(\Lambda)\longrightarrow\CC,$  the discrete integral of $f$ over a path 
 $\gamma=(z_0,\dots,z_N)$ is defined by
$$\int_\gamma f \delta z=\sum_{n=1}^N \dfrac{f(z_{n-1})+f(z_n)}{2}(z_n-z_{n-1}).$$

\begin{definition}\label{ferrand} Function $f:\bV(\Lambda)\longrightarrow\CC$ is said to be discrete analytic (DA) if for every closed path $\gamma$ in $\Lambda$
$$\int_\gamma f \delta z=0.$$
\end{definition}

Note that a  closed path of the form $(a,b,c,d,a),$
where $a\not=c$ and $b\not=d$ forms a face of $\Lambda;$ it is denoted simply by $abcd.$

\begin{theorem}\label{CR}
Function $f:\bV(\Lambda)\longrightarrow\CC$ is DA if, and only if,  on every face $abcd$ of $\Lambda$ $f(z)$ satisfies the discrete Cauchy - Riemann equation
$$\dfrac{f(a)-f(c)}{a-c}=\dfrac{f(b)-f(d)}{b-d}.$$
\end{theorem}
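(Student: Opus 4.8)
The plan is to establish the two implications separately. The forward direction ("only if") should drop out by testing the definition on a single face, whereas the reverse direction ("if") will require a topological reduction that exploits the simple connectivity of the plane.

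First I would prove that discrete analyticity forces the Cauchy--Riemann equation. Given a face $abcd,$ I would apply Definition \ref{ferrand} to the closed boundary path $\gamma=(a,b,c,d,a).$ Writing out the four trapezoidal terms of the discrete integral and collecting the coefficients of $f(a),f(b),f(c),f(d),$ I expect the expression to collapse to
$$2\int_\gamma f\,\delta z=(f(a)-f(c))(b-d)-(f(b)-f(d))(a-c).$$
Setting the left-hand side to zero and dividing by $(a-c)(b-d)$ --- both nonzero on a genuine face, as noted just before the statement --- yields the stated equation. This half is purely algebraic and routine.

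For the converse I would first observe that the displayed identity is \emph{reversible}: the Cauchy--Riemann equation on $abcd$ is equivalent to the vanishing of $\int_\gamma f\,\delta z$ over the boundary path $\gamma=(a,b,c,d,a).$ Thus the hypothesis says exactly that the discrete integral around every face boundary is zero. Next I would record the additivity and cancellation properties of the discrete integral: reversing the orientation of an edge negates its contribution, so a backtracking pair $(\dots,z,w,z,\dots)$ contributes nothing, and the integral depends only on the signed multiplicities of the traversed directed edges. The goal then reduces to a discrete Green's theorem, that the integral over any closed path equals the sum of the integrals over the faces it encloses.

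The heart of the argument --- and the step I expect to be the main obstacle --- is to justify this reduction for an arbitrary, possibly self-intersecting, closed path. The plan is to use that $\Lambda$ tessellates the simply connected plane $\CC$: any closed path is null-homotopic and can therefore be carried to the trivial path by a finite sequence of elementary moves, namely (i) insertion or deletion of a backtracking edge pair, and (ii) replacement of two consecutive sides of a face by the other two. Move (i) preserves the integral by the cancellation noted above, while move (ii) changes it by $\pm$ the integral around a face boundary, which is zero. Since the integral is invariant under every move and vanishes on the trivial path, it vanishes on the original path. The delicate point is to make the homotopy reduction precise for non-simple paths; equivalently, one may phrase it as the vanishing of $H_1$ of the lattice complex, so that every $1$-cycle is a $\ZZ$-combination of face boundaries, on each of which the integral is zero.
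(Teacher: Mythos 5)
Your proposal is correct and follows essentially the same route as the paper: both arguments rest on the algebraic identity $2\int_{abcd} f\,\delta z=(f(a)-f(c))(b-d)-(f(b)-f(d))(a-c)$, which makes the vanishing of each face integral equivalent to the Cauchy--Riemann equation, combined with the decomposition of any closed-path integral into a sum of face integrals. The only difference is one of detail: the paper asserts that decomposition in a single clause (``the integral over any closed path in $\Lambda$ can be written as a sum of integrals over faces''), whereas you supply its justification via null-homotopy of closed paths in the simply connected lattice complex --- a step the paper leaves to the reader.
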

\begin{proof} Since the integral over any closed path in $\Lambda$ can be written as a sum of integrals over faces, $f(z)$ is DA if, and only if,  for every face $abcd$ of $\Lambda$ 
$$\int_{abcd} f \delta z=0,$$ which is equivalent to \eqref{CR}.
\end{proof}

As is clear from Definition \ref{ferrand},  the integral of DA function $f(z)$ over a path $\gamma$ from $a$ to $b$ is independent  of the choice of $\gamma,$ hence a simplified notation can be used:
$$\int_a^b f \delta z=\int_\gamma f\delta z,$$
where $\gamma$ is any path in $\Lambda$ from $a$ to $b.$

\begin{theorem} Let $z_0\in V(\Lambda)$ be fixed. If $f(z)$ is a DA function, then so is
$$F(z)=\int_{z_0}^z f\delta z.$$\end{theorem}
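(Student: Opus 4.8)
The plan is to use the local characterization of discrete analyticity provided by Theorem \ref{CR}. First I would note that $F$ is well defined on $\bV(\Lambda)$: since $f$ is DA, the discussion preceding this theorem shows that $\int_{z_0}^z f\,\delta z$ is independent of the chosen path from $z_0$ to $z$, so the value $F(z)$ is unambiguous. It then remains to verify that $F$ satisfies the discrete Cauchy--Riemann equation on every face $abcd$ of $\Lambda$.

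The key step is to rewrite the diagonal differences of $F$ as short discrete integrals of $f$ taken along the edges of the face itself. Using path independence, I would evaluate $F(a)-F(c)$ along the path $c\to b\to a$ and $F(b)-F(d)$ along $d\to a\to b$, so that each difference becomes a sum of two terms of the form $\tfrac12\bigl(f(\cdot)+f(\cdot)\bigr)(\,\cdot\,)$ over consecutive edges. Writing $p=b-a=c-d$ and $q=d-a=c-b$ for the two edge vectors of the rhombus (so that $a-c=-(p+q)$ and $b-d=p-q$), these differences become explicit linear combinations of $f(a),f(b),f(c),f(d)$ with coefficients in $p$ and $q$.

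I would then substitute these expressions into the quotients $\tfrac{F(a)-F(c)}{a-c}$ and $\tfrac{F(b)-F(d)}{b-d}$, clear denominators, and collect terms. After cancellation, the required equality reduces, up to the nonzero factor $q$, to
$$(b-a)\bigl(f(a)+f(b)-f(c)-f(d)\bigr)+(d-a)\bigl(f(b)+f(c)-f(a)-f(d)\bigr)=0.$$
This is exactly a rearrangement of the discrete Cauchy--Riemann equation
$$\frac{f(a)-f(c)}{a-c}=\frac{f(b)-f(d)}{b-d}$$
for $f$ on the face $abcd$, which holds by Theorem \ref{CR} since $f$ is DA. Hence $F$ satisfies the same equation on $abcd$, and, the face being arbitrary, $F$ is DA.

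The computations are entirely routine; the only point requiring care is the bookkeeping of the rhombus geometry, namely the identities $b-a=c-d$ and $d-a=c-b$, which is precisely what forces the reduction to the hypothesis on $f$. I therefore expect no genuine obstacle beyond organizing this algebra cleanly.
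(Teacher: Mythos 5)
Your proposal is correct and follows essentially the same route as the paper: both arguments use path independence to express the diagonal differences $F(a)-F(c)$ and $F(b)-F(d)$ as two-edge integrals within the face, and then reduce the discrete Cauchy--Riemann equation for $F$ to the one for $f$ via the rhombus identities $b-a=c-d$, $c-b=d-a$. Your bookkeeping with the edge vectors $p,q$ (and the resulting identity, which is indeed a rearrangement of the Cauchy--Riemann equation for $f$ after factoring out $q$) is just a differently organized version of the paper's algebra, so there is nothing further to add.
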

\begin{proof} Let $abcd$ be a face of $\Lambda.$ One has
\begin{multline*}
\dfrac{F(a)-F(c)}{a-c}-\dfrac{F(b)-F(d)}{b-d}\\
=\dfrac{1}{a-c}\int_c^a f\delta z-\dfrac{1}{b-d}\int_d^b f\delta z\\
=\dfrac{f(c)(b-c)+f(b)(a-c)+f(a)(a-b)}{2(a-c)}-\\-\dfrac{f(d)(c-d)+f(c)(b-d)+f(b)(b-c)}{2(b-d)}
\\=\dfrac{(f(a)-f(c))(a-b)}{2(a-c)}+\dfrac{(f(b)-f(d))(c-d)}{2(b-d)}
\\
=\dfrac{f(a)-f(c)}{2(a-c)}(a-b+c-d)+\\
+\dfrac{c-d}{2}\left(\dfrac{f(b)-f(d)}{b-d}-\dfrac{f(a)-f(c)}{a-c}\right)=0
\end{multline*}
in view of Theorem \ref{CR} and the fact that $abcd$ is a rhombus.
\end{proof}

In what follows, two additional assumptions about the lattice $\Lambda$ are made. Firstly,  $0\in \bV(\Lambda).$
Secondly, for every $z\in \bV(\Lambda)$ there is $N\in\NN$ and a path $(z_0,\dots,z_N),$ such that:
\begin{gather}\label{good1} z_0=z;\\
z_n-z_{n-1}\not=\pm1,\quad n=1,\dots,N;\\
z_N-z_{N-1}=1.\label{good3}
\end{gather}
A path with the properties \eqref{good1} -- \eqref{good3} is said to be a {\em leash} of $z$ of length $N.$ 

\begin{remark}
The above assumptions can be satisfied with a suitable affine linear transformation. Indeed,
in view of Proposition \ref{finder}, one can find an infinite family of tracks in $\Lambda$ that that have parallel ties. With a suitable  rotation,  one can ensure that  these  ties  are parallel to the real axis  and, moreover, that every vertex in $\Lambda$ lies to the left of infinitely many tracks in the family.
\end{remark}

\section{Shift operators on the space of DA functions}
\label{shiftsec}
Denote by $\cH(\lambda)$ the space of DA functions on $\Lambda.$ 
\begin{definition}\label{fwd}
The forward shift operator $Z_+:\cH(\Lambda)\longrightarrow \cH(\Lambda)$ is defined by
$$(Z_+f)(z)=\dfrac{f(0)-f(z)}{2}+\int_0^z f\delta z.$$
\end{definition}

 \begin{theorem} \label{fwdthm} The kernel and range of $Z_+$ can be characterized as follows:
$$\ker(Z_+)=\{0\},\quad
\ran(Z_+)=\{f\in \cH(\Lambda) : f(0)=0\}.
$$
\end{theorem}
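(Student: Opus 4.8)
The plan is to reduce both statements to a single first-order relation satisfied by $Z_+f$ along the edges of $\Lambda.$ If $g=Z_+f$ and $z,w$ are adjacent vertices with $e=w-z,$ then, since the integral over a single edge is $\int_z^w f\delta z=\frac{f(z)+f(w)}{2}\,e,$ Definition \ref{fwd} yields
\begin{equation*}
g(w)-g(z)=\frac{f(z)}{2}(1+e)-\frac{f(w)}{2}(1-e).
\tag{$\star$}
\end{equation*}
Conversely, a function $f$ satisfies $(\star)$ on every edge if and only if $Z_+f$ and $g$ have equal increments along all edges, that is, $Z_+f-g$ is constant on the connected lattice $\Lambda.$ Two facts are immediate and will be used repeatedly: $(Z_+f)(0)=0$ by direct evaluation, and $Z_+f\in\cH(\Lambda)$ because it is a linear combination of $f,$ a constant, and the primitive $\int_0^z f\delta z,$ each of which is DA.

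For the kernel I would run $(\star)$ along a leash. Suppose $Z_+f=0,$ fix a vertex $z,$ and choose a leash $(z_0,\dots,z_N)$ as in \eqref{good1}--\eqref{good3}, so that $z_0=z,$ the terminal direction satisfies $z_N-z_{N-1}=1,$ and every intermediate direction $e_n=z_n-z_{n-1}$ satisfies $e_n\neq\pm1.$ On the terminal edge $(\star)$ collapses (the coefficient $1-e$ vanishes) to $0=f(z_{N-1}),$ while on each intermediate edge it reads $f(z_{n-1})=\frac{1-e_n}{1+e_n}\,f(z_n),$ which is legitimate since $e_n\neq-1.$ Propagating backward from $f(z_{N-1})=0$ forces $f(z)=f(z_0)=0,$ and as $z$ was arbitrary, $\ker(Z_+)=\{0\}.$

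For the range, the inclusion $\ran(Z_+)\subseteq\{f\in\cH(\Lambda):f(0)=0\}$ follows from the two facts noted above. For the reverse inclusion I would, given a DA function $g$ with $g(0)=0,$ construct a preimage $f$ by running the leash mechanism in reverse: the terminal $+1$ edge prescribes the value $f(z_{N-1})=g(z_N)-g(z_{N-1}),$ and the intermediate edges (all with $e_n\neq-1$) transport this value back to define $f(z).$ The decisive point is that this is \emph{well defined}, i.e. independent of the chosen leash and path. I would prove this by computing the holonomy of the transport $(\star)$ around a face $abcd$ all of whose edges avoid the directions $\pm1.$ A direct calculation shows that transporting an initial value once around the face multiplies it by
\begin{equation*}
\frac{1+e_1}{1-e_1}\cdot\frac{1+e_2}{1-e_2}\cdot\frac{1-e_1}{1+e_1}\cdot\frac{1-e_2}{1+e_2}=1
\end{equation*}
and adds an inhomogeneous term proportional to
\begin{equation*}
(b-d)\bigl(g(a)-g(c)\bigr)-(a-c)\bigl(g(b)-g(d)\bigr),
\end{equation*}
which vanishes precisely because $g$ satisfies the discrete Cauchy--Riemann equation of Theorem \ref{CR}. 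Thus the holonomy is trivial, the transport is path-independent, and $f$ is well defined; the leash hypothesis \eqref{good1}--\eqref{good3} ensures that every vertex is reached. Since $(\star)$ then holds on every edge, $Z_+f-g$ is constant, and evaluating at the origin (where both sides vanish) gives $Z_+f=g,$ so $g\in\ran(Z_+).$

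I expect the genuine difficulty to lie in this well-definedness step, and specifically in the bookkeeping forced by the degeneracy of $(\star)$ along the two directions $\pm1.$ On such an edge $(\star)$ does not transport a value but pins one endpoint down outright (the tail on a $+1$ edge, the head on a $-1$ edge), so the holonomy computation above does not apply verbatim to a face that contains a $\pm1$ edge. For those faces one must instead verify directly that the two forced read-off values are compatible with the transport relation along the remaining (rail) edges; this again collapses to the discrete Cauchy--Riemann equation for $g$ on that face. Arranging these separate checks so that path-independence holds uniformly across the whole lattice is, I anticipate, the delicate part of the argument.
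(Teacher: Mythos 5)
Your proposal is correct and takes essentially the same route as the paper: your edge relation $(\star)$ is precisely the paper's relation \eqref{aux524}, your kernel argument is the paper's leash computation, and your construction of the preimage by transport along leashes with consistency checked face-by-face (trivial holonomy on faces avoiding the directions $\pm 1,$ and a separate compatibility check, via the discrete Cauchy--Riemann equation of Theorem \ref{CR}, on faces with horizontal edges) is the content of the paper's Steps 2--4, packaged as induction on leash length. The computations you defer do check out exactly as you anticipate: the multiplicative holonomy around a rhombus is $1$ and the additive defect is $(b-d)(g(a)-g(c))-(a-c)(g(b)-g(d)),$ which vanishes by discrete analyticity of $g.$
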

\begin{proof} Suppose first that $f\in\cH(\Lambda)$ is such that $Z_+f=0.$ Then
$$\dfrac{f(z)-f(0)}{2}=\int_0^z f\delta z,\quad z\in \bV(\Lambda).$$
Therefore,  if $a,b\in \bV(\Lambda),$ then
$$f(b)-f(a)=2\int_a^b f\delta z.$$
In particular, if the vertices $a$ and $b$ are adjacent, then
\begin{equation}\label{re2a}\dfrac{f(b)-f(a)}{b-a}=f(a)+f(b).\end{equation}
Now fix an arbitrary  $z\in \bV(\Lambda),$ and let $(z_0,\dots,z_N)$ be a leash of $z.$ Then \eqref{re2a} implies that
$$f(z)=f(z_1)\dfrac{1+z_0-z_1}{1+z_1-z_0}=\dots=f(z_N)\prod_{n=1}^N\dfrac{1+z_{n-1}-z_n}{1+z_n-z_{n-1}}=0,$$
since $1+z_{N-1}-z_N=0.$ Thus $\ker(Z_+)=\{0\}.$

As to the range of $Z_+,$ the inclusion 
 $$\ran(Z_+)\subset\{f\in \cH(\Lambda) : f(0)=0\}
$$
follows immediately from Definition \ref{fwd}. To prove the opposite inclusion,
 assume that $g(z)$ is a DA function, such that $g(0)=0.$ One needs to show that there exists a DA function $f(z),$
such that $Z_+f=g.$ This can be done in a number of steps.
\noindent\paragraph{\bf Step 1.}  Let $f:\bV(\Lambda)\longrightarrow\CC$ be given. Then $f(z)$ is the pre-image of $g(z)$ under $Z_+$ if, and only if,
   the relation
\begin{equation}\label{aux524}f(v)(1+u-v)-f(u)(1+v-u)=2(g(u)-g(v))\end{equation}
holds on  every edge $(u,v)$ of $\Lambda.$ Indeed, if $f\in\cH(\Lambda)$ is such that $Z_+f=g,$ then for every pair $u,v\in\bV(\Lambda)$ it holds that
$$g(u)-g(v)=Z_+f(u)-Z_+f(v)=\dfrac{f(v)-f(u)}{2}+\int_v^u f\delta z.$$
In particular, if the vertices $u$ and $v$ are adjacent, then
$$g(u)-g(v)=\dfrac{f(v)-f(u)}{2}+\dfrac{f(v)+f(u)}{2}(u-v),$$
which is equivalent to \eqref{aux524}. Conversely, if \eqref{aux524} holds on every edge of $\Lambda$, then on every face $a_1a_2a_3a_4$ of $\Lambda$
one has (with $a_0:=a_4$):
\begin{multline*}\int_{a_1a_2a_3a_4}f\delta z=\sum_{k=1}^4\dfrac{f(a_{k-1})+f(a_k)}{2}(a_k-a_{k-1})\\
=\sum_{k=1}^4\left(g(a_k)-g(a_{k-1})-\dfrac{f(a_{k-1})-f(a_k)}{2}\right)
=0,\end{multline*}
hence $f(z)$ is a DA function. Now fix an arbitrary $z\in\bV(\Lambda)$ and choose a path $(z_0,\dots z_N)$ from $z_0=0$ to $z_N=z.$
Applying \eqref{aux524} to every edge of the path, one gets
\begin{multline*}(Zf)(z)=\dfrac{f(0)-f(z)}{2}+\int_0^z f\delta z\\=\sum_{n=1}^N\dfrac{f(z_{n-1})-f(z_n)}{2}+\sum_{n=1}^N\dfrac{f(z_{n-1}+f(z_n)}{2}(z_n-z_{n-1})\\
=\sum_{n=1}^N(g(z_n)-g(z_{n-1}))=g(z)-g(0)=g(z).
\end{multline*}

\noindent\paragraph{\bf Step 2.} Whenever $z\in\bV(\Lambda)$ has a leash of length 1, 
set
$$
f(z)=g(z+1)-g(z).
$$
Then \eqref{aux524} holds on every tie and every left rail of every track with horizontal ties. To see this, note that the above equality is a special case of \eqref{aux524} with
$u=z,$ $v=z+1.$ 
Furthermore, if $abcd$ is a face of $\Lambda,$ such that $c=b+1$ and $d=a+1,$ then 
\begin{multline*}
(1+b-a)f(a)-(1+a-b)f(b)\\=(g(a+1)-g(a))(1+b-a)-(g(b+1)-g(b))(1+a-b)\\
=(g(a+1)-g(b))(b+1-a)-(g(b+1)-g(a))(a+1-b)+\\+2(g(b)-g(a)).
\end{multline*}
By Theorem \ref{CR},
$$\dfrac{g(a+1)-g(b)}{a+1-b}=\dfrac{g(b+1)-g(a)}{b+1-a},$$
hence \eqref{aux524}  holds on the edge $(a,b),$ as well.

\noindent\paragraph{\bf Step 3.} If $abcd$ is a face without horizontal edges, and if the value $f(a)$ is given, then there is a unique way to assign the values $f(b),f(c),f(d)$ so that \eqref{aux524} holds on  all the edges of the face $abcd.$ Indeed, \eqref{aux524} holds on the edges $(a,b),$ $(b,c)$ and $(c,d)$ if, and only if,
\begin{gather*}
f(b)=\dfrac{2(g(b)-g(a))-(b-a+1)f(a)}{b-a-1},\\
f(c)=\dfrac{2(g(c)-g(b))-(c-b+1)f(b)}{c-b-1},\\
f(d)=\dfrac{2(g(d)-g(c))-(d-c+1)f(c)}{d-c-1}.
\end{gather*}
In this case one has
\begin{multline*}2(g(c)-g(a))=2(g(c)-g(b))+2(g(b)-g(a))\\
=(b-a+1)f(a)+(c-a)f(b)+(c-b-1)f(c)
\end{multline*}
and, similarly, 
$$2(g(d)-g(b))=(c-b+1)f(b)+(d-b)f(c)+(d-c-1)f(d).$$
Since $g(z)$ is a DA function,  it follows from Theorem \ref{CR} that
\begin{gather*}
\begin{split}
\dfrac{b-a+1}{c-a}f(a)&+f(b)+\dfrac{c-b-1}{c-a}f(c)\\&=\dfrac{c-b+1}{d-b}f(b)+f(c)+\dfrac{d-c-1}{d-b}f(d),\end{split}\\
(a-b-1)\dfrac{f(c)-f(a)}{c-a}-(d-c-1)\dfrac{f(d)-f(b)}{d-b}=0.
\end{gather*}
Since $a-b=d-c\not=1,$ one has
$$\dfrac{f(c)-f(a)}{c-a}=\dfrac{f(d)-f(b)}{d-b}$$
and
$$\int_{abcd}f\delta z=0.$$ But then
\begin{multline*} 
g(d)-g(a)=g(d)-g(c)+g(c)-g(b)+g(b)-g(a)\\
=\dfrac{f(a)-f(d)}{2}+\int_{abcd}f\delta z-\dfrac{f(a)+f(d)}{2}(a-d)\\
=\dfrac{f(a)-f(d)}{2}+\dfrac{f(a)+f(d}{2}(d-a),
\end{multline*}
and equality \eqref{aux524}  on the edge $(a,d)$ follows. 

\noindent\paragraph{\bf Step 4.} Proceeding as at Step 3, one can define $f(z)$  on the vertices of any face without horizontal edges that has a vertex with a leash of length 1. This set includes, in particular, all vertices  of $\Lambda$ that have a leash of length at most $2.$ By induction on the length of the shortest leash, one can extend the definition of $f(z)$ to the whole of $V(\Lambda)$ in such a way that \eqref{aux524} holds on every edge of $\Lambda.$ 
\end{proof}

\begin{definition}\label{bwd} The backward shift operator $Z_-:H(\Lambda)\longrightarrow H(\Lambda)$ is defined as follows:
given $f\in\cH(\Lambda),$ $Z_-f$ is the unique  element of $\cH(\Lambda),$ such that
$$Z_+Z_- f(z)=f(z)-f(0).$$
\end{definition}

\begin{proposition}\label{bwdprop} Operator $Z_-$ is a left inverse of $Z_+,$ and the kernel of $Z_-$ consists of constant DA functions.
\end{proposition}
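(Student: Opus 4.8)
The plan is to lean entirely on Theorem \ref{fwdthm}, which furnishes the two facts I need about $Z_+$: it is injective (its kernel is $\{0\}$), and its range is exactly $\{f\in\cH(\Lambda):f(0)=0\}$. The first thing I would check is that $Z_-$ is genuinely well-defined by Definition \ref{bwd}. Given $f\in\cH(\Lambda)$, the function $z\mapsto f(z)-f(0)$ is again DA (a DA function minus a constant) and vanishes at the origin, so Theorem \ref{fwdthm} places it in $\ran(Z_+)$; injectivity of $Z_+$ then guarantees its $Z_+$-preimage is unique. Hence $Z_-f$ exists and is uniquely determined, so $Z_-$ is a legitimate operator on $\cH(\Lambda)$.

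Next I would establish the left-inverse identity $Z_-Z_+f=f$. The key observation is that $(Z_+f)(0)=0$ for every $f$, which is immediate from Definition \ref{fwd} since both the difference quotient and the integral vanish at $z=0$. Applying Definition \ref{bwd} to the DA function $Z_+f$ in place of $f$, the element $Z_-(Z_+f)$ is characterized as the unique $g\in\cH(\Lambda)$ with $Z_+g=(Z_+f)-(Z_+f)(0)=Z_+f$. Injectivity of $Z_+$ then forces $g=f$, so $Z_-Z_+f=f$. I would also remark that $Z_-$ is only a left inverse, not a two-sided one: the defining relation gives $Z_+Z_-f=f-f(0)$, which equals $f$ precisely when $f(0)=0$.

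Finally I would identify $\ker(Z_-)$. If $f$ is a constant DA function, then $f-f(0)=0$, so $Z_+Z_-f=0$; since $\ker(Z_+)=\{0\}$, this yields $Z_-f=0$, placing every constant in the kernel. Conversely, if $Z_-f=0$, then applying $Z_+$ and using Definition \ref{bwd} gives $0=Z_+Z_-f=f-f(0)$, whence $f\equiv f(0)$ is constant. This shows $\ker(Z_-)$ is exactly the space of constant DA functions.

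I do not anticipate any serious obstacle: every step reduces to the injectivity and range description already secured in Theorem \ref{fwdthm}. The only points demanding a moment's care are confirming the well-definedness of $Z_-$ and verifying the vanishing $(Z_+f)(0)=0$ that drives the left-inverse argument; both are routine once Theorem \ref{fwdthm} is in hand.
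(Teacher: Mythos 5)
Your proof is correct and follows essentially the same route as the paper's: both arguments rest entirely on Theorem \ref{fwdthm} (injectivity and the range description of $Z_+$) together with the defining relation $Z_+Z_-f=f-f(0)$, applying it to $Z_+f$ and cancelling $Z_+$ to get the left-inverse identity, and reading off the kernel directly from the definition. Your explicit verification that $Z_-$ is well-defined (existence via the range of $Z_+$, uniqueness via its injectivity) is a point the paper leaves implicit in Definition \ref{bwd}, but it is the same underlying argument, not a different approach.
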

\begin{proof}
Let $f\in\cH(\Lambda).$ In view of Theorem \ref{fwdthm} and Definition \ref{bwd}, one has
\begin{gather*}Z_+Z_-Z_+f(z)=Z_+f(z)-Z_+f(0)=Z_+f(z),\\
Z_-Z_+f(z)=f(z).\end{gather*} As to the kernel of $Z_-,$ it follows immediately from Definition \ref{bwd}
that
$$Z_-f(z)=0\quad \Leftrightarrow\quad f(z)-f(0)=0\quad \Leftrightarrow\quad f(z)=f(0).$$
\end{proof}

\begin{remark} \label{shiftrem} The shift operators $Z_+$ and $Z_-$ are "discrete counterparts" of the classical shift operators
$$f(z)\mapsto zf(z)\quad\text{and}\quad f(z)\mapsto\dfrac{f(z)-f(0)}{z}$$
on the space of continuous analytic functions in the open unit disk.
\end{remark}

\section{Discrete analytic polynomials}
\label{DAP}

Denote  $$\bS(\Lambda)=\{t\in\CC : t(1+b-a)=-2\text{ for some }(a,b)\in \bE(\Lambda)\}.$$ 
In view of Proposition \ref{finder},  $\bS(\Lambda)$ is a finite set. It is contained in the complement of the open unit disk.
\begin{theorem}\label{genfunthm} Let $t\in\CC\setminus\bS(\Lambda).$ 
Then $t$ is an eigenvalue of the operator $Z_-.$ The corresponding eigenspace is one-dimensional; it is spanned by the DA function $e_t(z),$ which is determined as follows: $e_t(0)=1,$ and
\begin{equation}\label{genfun}e_t(z)=
\prod_{k=1}^N\dfrac{2+t(1+z_k-z_{k-1})}{2+t(1+z_{k-1}-z_k)}\end{equation}
for $z\not=0,$ where $(z_0,z_1,\dots z_N)$ is any path from $z_0=0$ to $z_N=z.$
\end{theorem}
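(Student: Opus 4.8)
The plan is to reduce the eigenvalue equation $Z_- e_t = t\,e_t$ to a first-order recurrence along the edges of $\Lambda$ and to recognize the product \eqref{genfun} as its unique normalized solution. First I would note that, by Definition \ref{bwd}, any $t$-eigenvector $e_t$ satisfies
$$t\,Z_+ e_t = Z_+(t\,e_t) = Z_+ Z_- e_t = e_t - e_t(0),$$
and conversely, since $Z_+$ is injective by Theorem \ref{fwdthm}, the identity $t\,Z_+ e_t = e_t - e_t(0)$ forces $Z_- e_t = t\,e_t.$ Thus, for a DA function normalized by $e_t(0)=1,$ being a $t$-eigenvector of $Z_-$ is equivalent to $Z_+ e_t = (e_t-1)/t.$ Applying the argument of Step 1 in the proof of Theorem \ref{fwdthm} with $g=(e_t-1)/t,$ this is in turn equivalent to the edge relation \eqref{aux524} holding on every edge $(u,v),$ which after clearing $t$ becomes the multiplicative form
$$e_t(v)\bigl(2+t(1+u-v)\bigr)=e_t(u)\bigl(2+t(1+v-u)\bigr).$$

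I would then use this recurrence to \emph{construct} $e_t.$ Because $t\notin\bS(\Lambda),$ every factor of the form $2+t(1+\beta-\alpha)$ attached to an edge $(\alpha,\beta)$ is nonzero, so the recurrence solves uniquely for $e_t(v)$ in terms of $e_t(u)$ along any path and, with $e_t(0)=1,$ yields exactly \eqref{genfun}. The crucial point is that this prescription be \emph{well defined}, i.e. independent of the path from $0$ to $z.$ As in the reduction of path integrals to face integrals used in Section \ref{prelim}, it suffices to verify that the product of factors around the boundary of a single face $abcd$ equals $1;$ reversing a step $u\to v$ to $v\to u$ replaces the corresponding factor by its reciprocal, so the pairs of opposite rhombus edges must cancel. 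Concretely, the parallelogram relations $b-a=c-d$ and $c-b=d-a$ force the numerator of the factor for $a\to b$ to equal the denominator of the factor for $c\to d$ (and conversely), and similarly for the pair $b\to c,\ d\to a,$ so the four factors collapse to $1.$ This is the step I expect to require the most care, since it is where the geometry of $\Lambda$ (opposite sides of a rhombus being equal) enters; the remainder is formal.

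Once $e_t$ is well defined, the multiplicative recurrence is precisely \eqref{aux524} for $g=(e_t-1)/t,$ so Step 1 of Theorem \ref{fwdthm} delivers at once that $e_t\in\cH(\Lambda)$ and that $Z_+ e_t=(e_t-1)/t.$ Multiplying by $t$ and using Definition \ref{bwd} gives $Z_+(t\,e_t)=e_t-e_t(0)=Z_+Z_- e_t,$ whence $Z_- e_t=t\,e_t$ by injectivity of $Z_+.$ The value $t=0$ also lies outside $\bS(\Lambda);$ there every factor in \eqref{genfun} equals $1,$ so $e_0\equiv1$ and $Z_- e_0=0$ in accordance with Proposition \ref{bwdprop}.

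For the one-dimensionality of the eigenspace I would run the reduction in reverse: if $f\in\cH(\Lambda)$ satisfies $Z_- f=t f,$ then $t\,Z_+ f=f-f(0),$ so $f$ obeys the same edge recurrence as $e_t.$ Since $t\notin\bS(\Lambda),$ the recurrence propagates the single value $f(0)$ uniquely along any path, forcing $f=f(0)\,e_t.$ Hence the $t$-eigenspace of $Z_-$ is spanned by $e_t,$ which completes the argument.
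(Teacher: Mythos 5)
Your proposal is correct and takes essentially the same route as the paper: both reduce the eigenvalue equation $Z_-e_t=te_t$ to the multiplicative edge relation $e_t(u)\bigl(2+t(1+v-u)\bigr)=e_t(v)\bigl(2+t(1+u-v)\bigr)$, construct $e_t$ as a path product whose path-independence follows from the rhombus identities $b-a=c-d$ and $c-b=d-a$, and obtain one-dimensionality by propagating the value at $0$ along edges. The only cosmetic difference is that you import discrete analyticity and $Z_+e_t=(e_t-1)/t$ from Step 1 of the proof of Theorem \ref{fwdthm} (with $g=(e_t-1)/t$, whose hypotheses are indeed met since that step's converse direction never uses analyticity of $g$), whereas the paper re-verifies the discrete Cauchy--Riemann equation on each face directly.
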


\begin{proof}
The proof is similar to that of Theorem \ref{fwdthm}. It can be broken down into a number of steps. 

\noindent\paragraph{\bf Step 1.} Let $t\in\CC\setminus\bS(\Lambda),$  and let  $e_t:V(\Lambda)\longrightarrow\CC.$ 
Then $e_t\in\cH(\Lambda)$ and $Z_-e_t=te_t$ if, and only if, 
the relation
\begin{equation}\label{aux1202} e_t(u)(2+t(1+v-u))=e_t(v)(2+t(1+u-v))\end{equation}
holds on every edge $(u,v)$ of $\Lambda.$ Indeed, if $e_t(z)$ is a DA function, then, by Definition \ref{bwd} and Proposition \ref{bwdprop},  equality
$$Z_-e_t(z)=te_t(z),$$
is equivalent to
 $$e_t(z)-e_t(0)=tZ_+e_t(z).$$
 This last equality holds if, and only if, 
 for every pair of adjacent vertices $u$ and $v$ one has
\begin{multline*} e_t(u)-e_t(v)=t(Z_+e_t(u)-Z_+e_t(v))=t\left(\dfrac{e_t(v)-e_t(u)}{2}+\int_v^u e_t\delta z\right)\\=
t\left(\dfrac{e_t(v)-e_t(u)}{2}+\dfrac{e_t(v)+e_t(u)}{2}(u-v)\right),\end{multline*}
which is equivalent to \eqref{aux1202}. Note, however, that if a function $e_t(z)$ satisfies \eqref{aux1202} on every edge of $\Lambda,$ then it is automatically DA, since  on every face $abcd$ 
one then has
\begin{multline*}
\dfrac{e_t(b)-e_t(d)}{b-d}=\dfrac{e_t(a)}{b-d}\left(\dfrac{2+t(1+b-a)}{2+t(1+a-b)}-\dfrac{2+t(1+d-a)}{2+t(1+a-d)}\right)\\
=\dfrac{e_t(a)(4t+2t^2)}{(2+t(1+a-b))(2+t(1+a-d))}\\=\dfrac{e_t(a)(4t+2t^2)}{(2+t(1+a-b))(2+t(1+b-c))}\\=
\dfrac{e_t(a)}{c-a}\left(\dfrac{(2+t(1+b-a))(2+t(1+c-b))}{(2+t(1+a-b))(2+t(1+b-c))}-1\right)=\dfrac{e_t(c)-e_t(a)}{c-a}.  \end{multline*}

\noindent\paragraph{\bf Step 2.}  Since $t\not\in\bS(\Lambda),$  one can define a function $e_t(z)$ by $e_t(0)=1$ and \eqref{genfun} for $z\not=0,$ where $(z_0,z_1,\dots z_N)$ is some path from $z_0=0$ to $z_N=z.$ Note that definition \eqref{genfun} is actually path-independent because on every face $abcd$ of $\Lambda$
one has:
\begin{gather*}
b-a=c-d,\quad c-b=d-a,\\
\begin{split}\dfrac{(2+t(1+b-a))(2+t(1+c-b))}{(2+t(1+a-b))(2+t(1+b-c))}\times\qquad\ \\
\times\dfrac{(2+t(1+d-c))(2+t(1+a-d))}{(2+t(1+c-d))(2+t(1+d-a))}&=1.\end{split}
\end{gather*}
Therefore, $e_t(z)$ satisfies \eqref{aux1202} on every edge of $\Lambda$ and is an eigenfunction of $Z_-$ associated with the eigenvalue $t.$ It is also clear from \eqref{aux1202} that such a function is unique up to a multiplicative constant.
\end{proof}

\begin{remark} \label{r0rem}Denote by $\bS_0(\Lambda)$  the complement of  the set of eigenvalues of the operator $Z_-.$  Using an approach that is quite similar to the proof of Theorem \ref{fwdthm}, one can show that $t=-1$ is not an eigenvalue of $Z_-,$ and hence $\bS_0(\Lambda)\not=\emptyset.$ On the other hand, according to Theorem \ref{genfunthm},  $\bS_0(\Lambda)\subset\bS(\Lambda).$ Depending on the lattice $\Lambda,$ the inclusion can be proper, although in some important cases, including the case of the square lattice, it is not. It is assumed from now on that $\bS_0(\Lambda)=\bS(\Lambda).$ \end{remark}

In view of \eqref{genfun}, $e_t(z)$ is a rational function in $t$ (of degree depending on $z,$ in general), analytic in the complement of $\bS(\Lambda)$ and, in particular, in the open unit disk $\DD.$ Consider the Taylor expansion
\begin{equation}\label{genfunexp}e_t(z)=\sum_{n=0}^\infty t^n z^{(n)},\quad t\in\DD, z\in \bV(\Lambda).\end{equation}
\begin{proposition}\label{tayprop} Taylor coefficients $z^{(n)}$ in the expansion \eqref{genfunexp} are DA functions of $z$ with the following properties:
\begin{equation}
\label{fwdbas}z^{(0)}=1\quad\text{and}\quad z^{(n)}=Z_+z^{(n-1)}\quad\forall n\in\NN,\end{equation}
in particular,  $z^{(1)}=z,$
\begin{gather}
\label{bwdbas} Z_-z^{(0)}=0\quad\text{and}\quad Z_-z^{(n)}=z^{(n-1)}\quad\forall n\in\NN,\\
\label{cauchy}\limsup_{n\rightarrow\infty}\sqrt[n]{|z^{(n)}|}\leq 1.
\end{gather}
Moreover, $z^{(0)},z^{(1)},z^{(2)},\dots$ form a linearly independent vector family in $\cH(\Lambda).$
\end{proposition}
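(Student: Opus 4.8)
The plan is to deduce everything from two facts established earlier: that $e_t(z)=\sum_{n\ge 0}t^nz^{(n)}$ is, for each fixed $t\in\DD$, a DA eigenfunction of $Z_-$ with eigenvalue $t$, and that both discrete analyticity and the operator $Z_+$ are \emph{locally finite} — the discrete Cauchy--Riemann equation on a face involves only the four vertex values, and $(Z_+f)(z)$ is a finite combination of values of $f$ along one path from $0$ to $z$. First I would show that each Taylor coefficient $z^{(n)}$ is DA. Fix a face $abcd$. By Theorem \ref{CR} the quantity $\tfrac{e_t(a)-e_t(c)}{a-c}-\tfrac{e_t(b)-e_t(d)}{b-d}$ vanishes for every $t\in\DD$; since each of $e_t(a),\dots,e_t(d)$ is a convergent power series in $t$, this is a power-series identity in $t$, and matching the coefficient of $t^n$ gives the discrete Cauchy--Riemann equation for $z^{(n)}$ on $abcd$. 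As the face was arbitrary, $z^{(n)}\in\cH(\Lambda)$.

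Next I would verify the shift recursions \eqref{fwdbas} and \eqref{bwdbas}. Setting $t=0$ in \eqref{genfun} gives $z^{(0)}=e_0(z)\equiv 1$, and a direct computation $Z_+1=\int_0^z1\,\delta z=z$ yields $z^{(1)}=z$. For the general recursion I would apply $Z_+$ termwise to the series: because $(Z_+e_t)(z)$ is a fixed finite linear combination of the values $e_t(w)=\sum_nt^nz^{(n)}(w)$ at the vertices $w$ of a path, one has $(Z_+e_t)(z)=\sum_nt^n(Z_+z^{(n)})(z)$. Combining the eigen-relation $Z_-e_t=te_t$ with the identity $Z_+Z_-e_t=e_t-e_t(0)$ from Definition \ref{bwd} gives $t\,Z_+e_t=e_t-1$; matching coefficients of $t^m$ then yields $z^{(m)}=Z_+z^{(m-1)}$ for all $m\in\NN$, which is \eqref{fwdbas}. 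Equation \eqref{bwdbas} follows at once: $Z_-z^{(0)}=Z_-1=0$ because constants lie in $\ker(Z_-)$ by Proposition \ref{bwdprop}, while $Z_-z^{(n)}=Z_-Z_+z^{(n-1)}=z^{(n-1)}$ because $Z_-$ is a left inverse of $Z_+$ by the same proposition.

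For the Cauchy estimate \eqref{cauchy} I would fix $z$ and invoke the fact, noted after Theorem \ref{genfunthm}, that $e_t(z)$ is rational in $t$ with poles only in $\bS(\Lambda)$, hence analytic on $\DD$ since $\bS(\Lambda)$ lies in the complement of the open unit disk. The radius of convergence of $\sum_nt^nz^{(n)}(z)$ is therefore at least $1$, and Cauchy--Hadamard gives $\limsup_{n\to\infty}\sqrt[n]{|z^{(n)}|}\le1$. Finally, linear independence follows from \eqref{bwdbas} by a telescoping argument: if $\sum_{n=0}^Mc_nz^{(n)}=0$ in $\cH(\Lambda)$, then applying $Z_-^M$ and using $Z_-^kz^{(n)}=z^{(n-k)}$ for $k\le n$ and $Z_-^kz^{(n)}=0$ for $k>n$ collapses the sum to $c_Mz^{(0)}=c_M$, forcing $c_M=0$; descending induction then gives $c_0=\dots=c_M=0$.

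The only genuine subtlety, and the step I would take most care with, is the interchange of the operators with the infinite summation — that is, the passage from an identity valid for all $t\in\DD$ to an identity of Taylor coefficients. This is legitimate precisely because both the Cauchy--Riemann relations and $Z_+$ depend on $f$ through finitely many evaluations, so nothing beyond the elementary uniqueness of power-series coefficients is needed; no analytic continuation or uniform-convergence estimate is required.
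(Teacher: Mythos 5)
Your proof is correct and follows essentially the same route as the paper's: discrete analyticity of $z^{(n)}$ by matching $t$-coefficients in the Cauchy--Riemann relation for $e_t$, the recursion \eqref{fwdbas} from the identity $e_t(z)-1=tZ_+e_t(z)$, \eqref{bwdbas} from Proposition \ref{bwdprop}, \eqref{cauchy} from Cauchy--Hadamard, and linear independence by applying powers of $Z_-$. You merely spell out details the paper leaves implicit (the termwise interchange justified by local finiteness of $Z_+$, and the analyticity of $e_t(z)$ on $\DD$ behind the Cauchy--Hadamard step), which is a faithful elaboration rather than a different argument.
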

\begin{proof} Discrete analyticity of $z^{(n)}$ can be seen by differentiating the discrete Cauchy - Riemann equation for $e_t(z)$ with respect to $t$.  Substituting the expansion \eqref{genfunexp} into the equality
$$e_t(z)-1=tZ_+e_t(z),$$ one obtains \eqref{fwdbas}. In particular,
$$z^{(1)}=Z_+1=\int_0^z \delta z=z.$$
In view of Proposition \ref{bwdprop}, \eqref{bwdbas} holds, as well, and hence for a linear combination $a_0z^{(0)}+a_1z^{(1)}+\dots+a_Nz^{(N)}$ one has
$$(Z_-^n(a_0z^{(0)}+a_1z^{(1)}+\dots+a_Nz^{(N)}))(0)=a_n,\quad n=0,1,\dots,N,$$
implying  linear independence of the family $z^{(0)},z^{(1)},z^{(2)},\dots$ Finally,  formula \eqref{cauchy} is a consequence of the Cauchy - Hadamard theorem.\end{proof}

\begin{definition} \label{dapoly} Elements of the subspace of $\cH(\Lambda)$ spanned by $$z^{(0)},z^{(1)},z^{(2)},\dots$$ are called DA polynomials. 
\end{definition}

It should be mentioned here that the space of DA polynomials defined above was originally introduced by Duffin in \cite{rhombic} using a different basis:
$$\rho_0=1,\quad \rho_n(z)=n\int_0^z \rho_{n-1}\delta z.$$

\section{Convolution product and rational DA functions}
\label{ratsec}
There is little difficulty involved in adapting the results of previous sections to the case of DA functions with matricial values. Such functions can be viewed as matrices with DA entries, with the discrete integral and the shift operators being applied entry-wise. The space of $\CC^{m\times n}$-valued DA functions on $\bV(\Lambda)$ is denoted by $\cH(\Lambda)^{m\times n}.$ Matrix-valued DA polynomials are introduced similarly.

\begin{definition}\label{convoproddef}
The convolution product $\odot$ of a $\CC^{m\times n}$-valued DA polynomial 
$$p(z)=\sum_{n=0}^N A_nz^{(n)}$$
with a  $\CC^{n\times k}$-valued DA function $f(z)$  is given by
$$(p\odot f)(z):=\sum_{n=0}^N A_n(Z_+^nf)(z).$$
Similarly,  if $g(z)$ is a $\CC^{k\times m}$-valued DA function, then 
$$(g\odot p)(z):=\sum_{n=0}^N (Z_+^ng)(z)A_n.$$
\end{definition}

Note that the space of $\CC$-valued DA polynomials equipped with the convolution product $\odot$
is a commutative ring, where
$$z^{(m)}\odot z^{(n)}=z^{(m+n)}.$$

\begin{theorem}\label{resolve} Let $A\in\CC^{m\times m}.$  Then the DA polynomial $I_m-zA$ is  $\odot$-invertible in $\cH(\Lambda)^{m\times m}$
if,  and only if, 
\begin{equation}\label{excl}\bS(\Lambda)\cap\sigma(A)=\emptyset.\end{equation}
In this case the  $\odot$-inverse  of   $I_m-zA$  is given by
\begin{equation}\label{resA}(I_m-zA)^{-\odot}:=\prod_{k=1}^N\big((2I_m+(1+z_k-z_{k-1})A)(2I_m+(1+z_{k-1}-z_k)A)^{-1}\big),\end{equation}
where $(z_0,z_1,\dots z_N)$ is any path from $z_0=0$ to $z_N=z.$
\end{theorem}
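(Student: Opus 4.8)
The plan is to read off from Definition \ref{convoproddef} that, for a DA function $g\in\cH(\Lambda)^{m\times m}$,
$$(I_m-zA)\odot g=g-A\,Z_+g,$$
so that $I_m-zA$ is $\odot$-invertible precisely when there is a DA $g$ with $g-A\,Z_+g=I_m$. Because every value of the prospective inverse will be a rational expression in the single matrix $A$, all such values commute with $A$ and with each other, so $g-A\,Z_+g=g-(Z_+g)A$ and a one-sided inverse is automatically two-sided; I will therefore concentrate on the left equation. The key observation is that $g-A\,Z_+g=I_m$ is the matrix analogue of the eigenfunction identity $e_t-1=t\,Z_+e_t$ from the proof of Theorem \ref{genfunthm}, with the scalar parameter $t$ replaced by $A$. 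This suggests that the inverse should be the matrix substitution $t\mapsto A$ into the product formula \eqref{genfun}, which is exactly the claimed expression \eqref{resA}.

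For the "if" direction, I assume $\bS(\Lambda)\cap\sigma(A)=\emptyset$ and first check that each factor in \eqref{resA} is invertible: if $2I_m+(1+z_{k-1}-z_k)A$ were singular, some eigenvalue $\mu\in\sigma(A)$ would satisfy $\mu(1+z_{k-1}-z_k)=-2$, and since $(z_k,z_{k-1})\in\bE(\Lambda)$ this would place $\mu$ in $\bS(\Lambda)$, contradicting the hypothesis. I then define $g(z)$ by \eqref{resA}. Path-independence follows from the face identity established in Step 2 of the proof of Theorem \ref{genfunthm}, which holds verbatim here because the matrix factors, all being rational functions of $A$, commute. One then has $g(0)=I_m$ (empty product), and extending a path by a single edge yields the matrix edge relation $g(v)\bigl(2I_m+(1+u-v)A\bigr)=g(u)\bigl(2I_m+(1+v-u)A\bigr)$, the analogue of \eqref{aux1202}. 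Repeating the face computation of Step 1 of Theorem \ref{genfunthm} shows that $g$ is DA and that this edge relation, together with $g(0)=I_m$, is equivalent to $g-A\,Z_+g=I_m$. Hence $g=(I_m-zA)^{-\odot}$.

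For the "only if" direction, I argue by contradiction: suppose $\lambda\in\bS(\Lambda)\cap\sigma(A)$ and that $g$ is a $\odot$-inverse, so $g-A\,Z_+g=I_m$. Choosing a left eigenvector $\xi\neq0$ with $\xi A=\lambda\xi$ and setting $h=\xi g$ (a row of scalar DA functions), left multiplication gives $h-\lambda\,Z_+h=\xi$, a constant row vector. Applying $Z_-$ and invoking Proposition \ref{bwdprop} (namely $Z_-Z_+=\mathrm{id}$ and $Z_-$ annihilates constants) yields $Z_-h=\lambda h$, so each component of $h$ is an eigenfunction of $Z_-$ with eigenvalue $\lambda$. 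But by Theorem \ref{genfunthm} together with the standing assumption of Remark \ref{r0rem} that $\bS_0(\Lambda)=\bS(\Lambda)$, no element of $\bS(\Lambda)$ is an eigenvalue of $Z_-$; hence $h=0$. This contradicts $h(0)=\xi g(0)=\xi\neq0$, completing the argument.

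I expect the "if" direction to be essentially a routine matrix transcription of Theorem \ref{genfunthm} once commutativity of functions of $A$ is noted, the only points needing care being the invertibility of the factors and the path-independence of the product. The conceptual heart, and the main obstacle, is the sharp characterization in the "only if" direction: recognizing that applying $Z_-$ transforms the inverse equation into a $Z_-$-eigenvalue equation at the forbidden value $\lambda$, and reducing the matrix statement to the scalar spectral fact through a left eigenvector. The clean point is that this avoids any case analysis on individual edges and uses only the spectral description of $Z_-$ already available.
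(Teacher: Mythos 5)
Your proposal is correct and takes essentially the same approach as the paper: necessity via the spectral argument that an eigenvector of $A$ at a point of $\bS(\Lambda)$ would force a $Z_-$-eigenfunction at a forbidden eigenvalue (the paper uses a right eigenvector and the relation $Z_-f=fA$, you use a left eigenvector and apply $Z_-$ directly), and sufficiency by transcribing the construction of Theorem \ref{genfunthm} with $t$ replaced by $A$, relying on the commutativity of rational functions of $A$ and concluding that the one-sided inverse is two-sided. The remaining differences (which side the polynomial multiplies on, your explicit check that each factor in \eqref{resA} is invertible) are cosmetic.
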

\begin{proof}  Let $r(z)=I_m-zA$ and 
suppose  that $f$ is  a   left $\odot$-inverse of $r(z)$ in  $\cH(\Lambda)^{m\times m}:$ 
$$(f\odot r)(z)=I_m,$$ or, equivalently,
\begin{equation}\label{lid}f(z)=I_m+Z_+f(z)A.\end{equation}
Then $$f(0)=I_m\quad\text{and}\quad Z_-f(z)=f(z)A.$$
Let $t$ be an eigenvalue of $A$ with the associated eigenvector $w\not=0.$
Then $$Z_-f(z)w=tf(z)w.$$
If $t\in\bS,$ then (see Remark \ref{r0rem}) $f(z)w=0,$ which leads to a contradiction for $z=0.$ 
Thus the necessity of the condition \eqref{excl} is established.

Conversely, suppose that \eqref{excl} is in force and observe that \eqref{lid} is equivalent to $f(0)=I_m$ and
$$
f(u)(2I_m+(1+v-u)A)=f(v)(2I_m+(1+u-v)A)
$$
holding on every edge of $\Lambda$ (compare with \eqref{aux1202}). Exactly as in the proof of Theorem \ref{genfunthm}, one may deduce
that there  exists  a unique DA function $f(z)$ satisfying \eqref{lid}. Moreover,  $f(z)=(I_m-zA)^{-\odot}$ is given by  \eqref{resA} (in particular, this formula is path-independent).  Since $(I_m-zA)^{-\odot}$ commutes with $A,$ it is a right $\odot$-inverse of $r(z),$ as well. 
\end{proof}

\begin{remark}
If the spectral radius $\rho(A)<1,$ then
$$(I-zA)^{-\odot}=\sum_{n=0}^\infty z^{(n)}A^n.$$
\end{remark}

\begin{definition}\label{defrat} A matrix-valued DA function $f(z)$ is said to be rational if it can be represented as
$$f(z)=D+C(I-zA)^{-\odot}\odot (zB),$$
where $A,B,C,D$ are complex matrices of suitable dimensions,  and 
$$\bS(\Lambda)\cap\sigma(A)=\emptyset.$$ Such a representation is called a realization of $f(z).$
\end{definition}

It may not be  obvious from the above definition that a sum of matrix-valued rational DA functions of suitable dimensions is itself rational.
This is settled in the proposition below.

\begin{proposition} Let $m,n\in\NN.$
Let $f_1,f_2\in\cH(\Lambda)^{m\times n}$ be rational DA functions with given realizations:
$$f_j(z)=D_j+C_j(I-zA_j)^{-\odot}\odot(zB_j),\quad j=1,2.$$
Then $f_1(z) +f_2(z)$ is a rational DA function, as well, which admits the realization
$$(f_1+f_2)(z)
=D_1+D_2+\begin{pmatrix}C_1&C_2\end{pmatrix}\left(I-z\begin{pmatrix}A_1&0\\0&A_2\end{pmatrix}\right)^{-\odot}\odot
\left(z\begin{pmatrix}B_1\\B_2\end{pmatrix}\right).
$$
\end{proposition}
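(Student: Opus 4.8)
The plan is to verify that the proposed block realization actually reproduces $f_1(z)+f_2(z)$, by exploiting the fact that the convolution product $\odot$ acts through powers of the forward shift $Z_+$ and is compatible with the block-diagonal structure of $A=\begin{pmatrix}A_1&0\\0&A_2\end{pmatrix}$. The first thing I would check is that the hypothesis $\bS(\Lambda)\cap\sigma(A_j)=\emptyset$ for $j=1,2$ implies $\bS(\Lambda)\cap\sigma(A)=\emptyset$ for the block-diagonal $A$; this is immediate since the spectrum of a block-diagonal matrix is the union $\sigma(A_1)\cup\sigma(A_2)$. Hence by Theorem \ref{resolve} the factor $(I-zA)^{-\odot}$ is well-defined, so the right-hand side is a genuine realization and therefore a rational DA function by Definition \ref{defrat}; this already settles the \emph{form} of the claim, and it remains only to confirm the \emph{value}.

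The key computational step is to show that $(I-zA)^{-\odot}$ respects the block decomposition, i.e.
\begin{equation}\label{blockres}
\left(I-z\begin{pmatrix}A_1&0\\0&A_2\end{pmatrix}\right)^{-\odot}
=\begin{pmatrix}(I-zA_1)^{-\odot}&0\\0&(I-zA_2)^{-\odot}\end{pmatrix}.
\end{equation}
I would establish \eqref{blockres} directly from the product formula \eqref{resA}: each factor $(2I+(1+z_k-z_{k-1})A)(2I+(1+z_{k-1}-z_k)A)^{-1}$ is block-diagonal because $A$ is, and a product of block-diagonal matrices is block-diagonal with the blocks multiplied separately. Thus the upper-left block of the product equals the product of the upper-left blocks, which is exactly the formula \eqref{resA} defining $(I-zA_1)^{-\odot}$, and similarly for the lower-right block.

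Granting \eqref{blockres}, the remainder is a routine block multiplication. Writing $C=\begin{pmatrix}C_1&C_2\end{pmatrix}$ and $B=\begin{pmatrix}B_1\\B_2\end{pmatrix}$, one computes
\begin{align*}
C(I-zA)^{-\odot}\odot(zB)
&=\begin{pmatrix}C_1&C_2\end{pmatrix}
\begin{pmatrix}(I-zA_1)^{-\odot}&0\\0&(I-zA_2)^{-\odot}\end{pmatrix}\odot
\begin{pmatrix}zB_1\\zB_2\end{pmatrix}\\
&=C_1(I-zA_1)^{-\odot}\odot(zB_1)+C_2(I-zA_2)^{-\odot}\odot(zB_2),
\end{align*}
where bilinearity of $\odot$ over the scalar matrices $C_j,B_j$ (which follows from its definition in terms of $Z_+^n$) justifies distributing the product across the block sum. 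Adding the constant term $D_1+D_2$ then yields $f_1(z)+f_2(z)$, as claimed. \textbf{The main obstacle} is purely bookkeeping: one must be careful that the convolution $\odot$ really does commute with extracting matrix blocks, since $\odot$ is not an ordinary matrix product but is defined entry-wise through the operators $Z_+^n$; the cleanest way to handle this is to note that in the realization the matrices $C,B$ act as ordinary (constant) linear maps on the DA-function-valued entries, so block partitioning of $C$ and $B$ commutes with $\odot$ by construction, and the only genuinely DA-analytic ingredient is the block formula \eqref{blockres}, which I isolate above.
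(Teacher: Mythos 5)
Your proof is correct and follows essentially the same route as the paper: the paper's entire argument is the block-diagonal identity you label \eqref{blockres}, which you justify (as the paper implicitly does) via the product formula \eqref{resA}, with the spectrum condition and the final block multiplication being the routine bookkeeping the paper leaves to the reader.
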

\begin{proof}
It suffices to observe that
$$\left(I-z\begin{pmatrix}A_1&0\\0&A_2\end{pmatrix}\right)^{-\odot}=\begin{pmatrix}(I-zA_1)^{-\odot}&0\\0&(I-zA_2)^{-\odot}\end{pmatrix}.$$\end{proof}

Definition \ref{defrat} suggests that rational DA functions are closely related to rational functions of the complex variable. One must treat this relationship with caution, since
a given rational function admits infinitely many distinct realizations. A precise formulation of the relationship is stated in the following theorem.

\begin{theorem} \label{ratmap} Let $m,n\in\NN.$
The mapping 
$$f(z)\mapsto \tau f(t):=\sum_{k=0}^\infty Z_-^k f(0)t^k$$
is a linear bijection from the space of $\CC^{m\times n}$-valued rational DA functions to the space of 
$\CC^{m\times n}$-valued rational  functions of the complex variable $t$   that have no poles in the set 
$$\bP(\Lambda)=\left\{t\in\CC: t=0\text{ or }\dfrac{1}{t}\in\bS(\Lambda)\right\}.$$
For any realization 
\begin{equation}\label{realization}f(z)=D+C(I-zA)^{-\odot}\odot (zB)\end{equation}
 of a rational DA function $f(z)$ it holds that
\begin{equation}\label{realreal}\tau f(t)=D+tC(I-tA)^{-1}B.\end{equation}
\end{theorem}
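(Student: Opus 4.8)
The plan is to treat $\tau f(t)$ first as a formal power series in $t$, extract from the resolvent identity the closed form \eqref{realreal}, and only then deduce rationality and bijectivity. I would begin by recording three elementary facts that follow directly from the definition of $\tau$ and Proposition \ref{bwdprop}: the map $\tau$ is $\CC$-linear and commutes with left multiplication by a constant matrix; for a constant DA function $D$ one has $\tau(D)=D$, since $Z_-D=0$; and, most importantly, the intertwining relation
$$\tau(Z_+f)(t)=t\,\tau f(t).$$
This holds because $(Z_+f)(0)=0$ by Definition \ref{fwd} kills the constant term, while for $k\ge1$ the identity $Z_-Z_+=I$ from Proposition \ref{bwdprop} gives $Z_-^kZ_+f=Z_-^{k-1}f$. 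In particular $\tau(z^{(n)})=t^n$ and $\tau(zB)=tB$.

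Next I would derive \eqref{realreal}. Setting $g:=(I-zA)^{-\odot}\odot(zB)$, associativity of $\odot$ (immediate from $Z_+^mZ_+^n=Z_+^{m+n}$) together with the two-sidedness of the $\odot$-inverse in Theorem \ref{resolve} gives $(I-zA)\odot g=zB$. Expanding the left-hand side through Definition \ref{convoproddef}, with $I-zA=I\,z^{(0)}-A\,z^{(1)}$, produces the discrete resolvent identity
$$g-A\,Z_+g=zB.$$
Applying $\tau$ and invoking the intertwining relation converts this into $(I-tA)\,\tau g(t)=tB$, whence $\tau g(t)=t(I-tA)^{-1}B$ and $\tau f(t)=D+C\,\tau g(t)=D+tC(I-tA)^{-1}B$. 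Since $I-tA$ is invertible at $t=0$, the right-hand side is rational and analytic at the origin; thus the formal series $\tau f$ actually converges near $0$ and represents it. Its poles lie among the points $1/\lambda$ with $\lambda\in\sigma(A)\setminus\{0\}$, and as $\sigma(A)\cap\bS(\Lambda)=\emptyset$ while $t=0$ is not a pole, none lies in $\bP(\Lambda)$. So $\tau$ indeed lands in the asserted space, and linearity has already been noted.

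For surjectivity I would take a rational $R(t)$ with no poles in $\bP(\Lambda)$; as $0\in\bP(\Lambda)$, $R$ is analytic at $0$ and admits a minimal realization $R(t)=D+tC(I-tA)^{-1}B$. For such a realization every nonzero eigenvalue of $A$ is the reciprocal of a pole of $R$, each of which avoids $\bS(\Lambda)$; since $\bS(\Lambda)$ lies outside $\DD$ we also have $0\notin\bS(\Lambda)$, so $\sigma(A)\cap\bS(\Lambda)=\emptyset$. Then $f(z):=D+C(I-zA)^{-\odot}\odot(zB)$ is a bona fide rational DA function by Theorem \ref{resolve}, and $\tau f=R$ by \eqref{realreal}. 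For injectivity it suffices, by linearity, to prove that $\tau f=0$ forces $f=0$; expanding \eqref{realreal} yields $D=0$ and $CA^jB=0$ for all $j\ge0$, meaning that $C$ annihilates the smallest $A$-invariant subspace $\mathcal{N}$ containing the columns of $B$.

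The main obstacle is precisely this last implication, since when $\rho(A)\ge1$ one cannot write $f=\sum_{k\ge1}z^{(k)}CA^{k-1}B$ and appeal to the linear independence of the $z^{(k)}$ established in Proposition \ref{tayprop}. Here I would use the product formula \eqref{resA}: each factor is a rational function of the single matrix $A$ and therefore a polynomial in $A$, so $(I-zA)^{-\odot}$ is $\CC[A]$-valued at every vertex; because $Z_+$ acts through finite linear combinations it preserves this property, and hence every column of $g(z)=\bigl(Z_+(I-zA)^{-\odot}\bigr)(z)\,B$ lies in $\mathcal{N}$. Since $C$ vanishes on $\mathcal{N}$, we obtain $f(z)=Cg(z)=0$ for every $z\in\bV(\Lambda)$, which finishes the argument. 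I expect this invariant-subspace step, rather than the formal identities, to carry the genuine difficulty.
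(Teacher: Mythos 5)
Your proposal is correct and follows essentially the same route as the paper: it establishes \eqref{realreal} from the two-sided $\odot$-inverse of $I-zA$ together with Proposition \ref{bwdprop}, gets surjectivity by realizing a rational function with no poles in $\bP(\Lambda)$ with a state matrix $A$ satisfying $\sigma(A)\cap\bS(\Lambda)=\emptyset$, and proves injectivity by showing that the $A$-invariant subspace $\sum_{j}\ran(A^jB)\subset\ker(C)$ is preserved by the product formula \eqref{resA}, so that $C(I-zA)^{-\odot}B\equiv 0$ and hence $f\equiv 0$. Your two refinements --- phrasing the last step as ``each value of $(I-zA)^{-\odot}$ is a polynomial in $A$'' (via Cayley--Hamilton) rather than the paper's ``each factor maps $\cM$ bijectively to itself,'' and making explicit the minimal-realization argument that the paper leaves implicit for surjectivity --- are only cosmetic differences in an otherwise identical argument.
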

\begin{proof}
From  the identity
$$(I-zA)\odot(I-zA)^{-\odot}=I$$ and
Proposition \ref{bwdprop} it follows that
$$Z_-(I-zA)^{-\odot}=(I-zA)^{-\odot}A.$$
Therefore,
for a rational DA function $f(z)$ of the form \eqref{realization} one has
$$D=f(0),\quad\text{and}\quad Z_-^kf(0)=CA^{k-1}B\quad\forall k\in\NN, $$ from which \eqref{realreal} and  the surjectivity of the mapping $\tau$ follow.

To verify the injectivity of $\tau,$ consider a rational function
$$D+C(I-zA)^{-\odot}\odot(zB)$$
in the kernel of $\tau,$ where $A\in\CC^{\ell\times \ell}$  satisfies  \eqref{excl}. Then
$$D=0\text{ and } CA^{k-1}B=0\quad \forall k\in\NN.$$ Consider
$$\cM=\ran(B)+\ran(AB)+\ran(A^2B)+\dots+\ran(A^{\ell-1}B)$$
-- an $A$-invariant subspace of the finite-dimensional $\ker(C)$. Since 
  $A$ satisfies  \eqref{excl},  for every edge $(u,v)$ of $\Lambda$ the matrix $2I_\ell+(1+u-v)A$ is non-degenerate, and the associated linear transformation of $\CC^\ell$ maps  $\cM$ bijectively to itself.  Therefore, In view of \eqref{resA},  
$$C(I-zA)^{-\odot}B=0$$
and
$$C(I-zA)^{-\odot}\odot(zB)=Z(C(I-zA)^{-\odot}B)=0.$$
\end{proof}

Theorem \ref{ratmap} allows to extend the convolution product $\odot$ to the case where the factors are rational DA functions.

\begin{definition}\label{defprod}
Let $m,n,k\in\NN.$ Let $f_1\in\cH(\Lambda)^{m\times n}$ and $f_2\in\cH(\Lambda)^{n\times k}$ be rational DA functions. Define the convolution product $f_1\odot f_2$ by
$$f_1\odot f_2=\tau^{-1}(\tau f_1\cdot \tau f_2).$$
\end{definition}

\begin{remark} Note that in the case where $f_1$ or $f_2$ is a DA polynomial, Definition \ref{defprod} agrees with Definition \ref{convoproddef}, since
$$\tau z^{(n)}= t^n.$$
\end{remark}

In terms of realizations, the convolution product of rational DA functions can be described more explicitly, as the following proposition shows.

\begin{proposition}\label{prodprop} Let $f_1\in\cH(\Lambda)^{m\times n}$ and $f_2\in\cH(\Lambda)^{n\times k}$ be rational DA functions with given realizations:
$$f_j(z)=D_j+C_j(I-zA_j)^{-\odot}\odot(zB_j),\quad j=1,2,$$
then
$$(f_1\odot f_2)(z)=D+C(I-zA)^{-\odot}\odot(zB),$$
where
\begin{gather*}
A=\begin{pmatrix} A_2&0\\ B_1C_2&A_1\end{pmatrix}, \quad B=\begin{pmatrix} B_2\\ B_1D_2\end{pmatrix};\\
C=\begin{pmatrix} D_1C_2&C_1\end{pmatrix},\quad D=D_1D_2.
\end{gather*}
\end{proposition}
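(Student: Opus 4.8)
The plan is to reduce everything to the complex-variable side via the isomorphism $\tau$ established in Theorem \ref{ratmap}, where the product of rational DA functions is, by Definition \ref{defprod}, simply the ordinary product of rational matrix functions of $t$. Thus it suffices to verify that the proposed matrices $A,B,C,D$ furnish a realization of the product $\tau f_1(t)\cdot\tau f_2(t)$ in the classical sense. By \eqref{realreal}, the claim $(f_1\odot f_2)(z)=D+C(I-zA)^{-\odot}\odot(zB)$ is equivalent to the identity
\begin{equation*}
\bigl(D_1+tC_1(I-tA_1)^{-1}B_1\bigr)\bigl(D_2+tC_2(I-tA_2)^{-1}B_2\bigr)=D+tC(I-tA)^{-1}B,
\end{equation*}
where on the right $A,B,C,D$ are the block matrices in the statement. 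This is the standard cascade (series-connection) formula for realizations of a product of rational functions, and the whole proposition is a translation of that classical fact into the DA setting.

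First I would invoke Theorem \ref{ratmap} to write $\tau f_j(t)=D_j+tC_j(I-tA_j)^{-1}B_j$ for $j=1,2$, noting that the spectral conditions $\bS(\Lambda)\cap\sigma(A_j)=\emptyset$ guarantee these are legitimate realizations and that $\sigma(A)=\sigma(A_1)\cup\sigma(A_2)$ for the block-triangular $A$, so $A$ also satisfies \eqref{excl} and $\tau^{-1}$ applies to the product. Next I would expand the left-hand product into four terms $D_1D_2$, $tD_1C_2(I-tA_2)^{-1}B_2$, $tC_1(I-tA_1)^{-1}B_1D_2$, and $t^2C_1(I-tA_1)^{-1}B_1C_2(I-tA_2)^{-1}B_2$, and recognize these as the $D$, and the three contributions produced by the block resolvent $(I-tA)^{-1}$. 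The key computation is the block inversion
\begin{equation*}
\left(I-t\begin{pmatrix}A_2&0\\ B_1C_2&A_1\end{pmatrix}\right)^{-1}
=\begin{pmatrix}(I-tA_2)^{-1}&0\\ t(I-tA_1)^{-1}B_1C_2(I-tA_2)^{-1}&(I-tA_1)^{-1}\end{pmatrix},
\end{equation*}
after which multiplying by $tC=t\begin{pmatrix}D_1C_2&C_1\end{pmatrix}$ on the left and $B=\begin{pmatrix}B_2\\ B_1D_2\end{pmatrix}$ on the right and adding $D=D_1D_2$ reproduces exactly the four terms above.

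Having matched $\tau(f_1\odot f_2)$ with $\tau$ applied to the candidate realization, I would conclude by applying the bijectivity of $\tau$ (Theorem \ref{ratmap}): two rational DA functions with equal images under $\tau$ coincide, so $f_1\odot f_2=D+C(I-zA)^{-\odot}\odot(zB)$ as claimed. I expect the main obstacle to be purely bookkeeping rather than conceptual: one must carry the block structure through the resolvent inversion carefully and confirm that the off-diagonal block of $(I-tA)^{-1}$ is precisely $t(I-tA_1)^{-1}B_1C_2(I-tA_2)^{-1}$, since a sign or transposition error there would scramble the cross term. A minor point worth checking is that the $A$-invariant-subspace argument from the injectivity half of Theorem \ref{ratmap} is not needed here — the bijectivity of $\tau$ is used only as a black box — so no delicate reasoning about the reachability space $\cM$ reappears; the entire content is the classical series-connection realization identity read backward through $\tau^{-1}$.
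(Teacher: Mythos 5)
Your proof is correct and takes essentially the same route as the paper's: reduce via Definition \ref{defprod} and Theorem \ref{ratmap} to the classical series-connection realization identity for $\tau f_1(t)\,\tau f_2(t)$, verified by exactly the same block lower-triangular resolvent inversion. Your extra observation that $\sigma(A)=\sigma(A_1)\cup\sigma(A_2)$ guarantees \eqref{excl} is a worthwhile detail the paper leaves implicit, but it does not change the argument.
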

\begin{proof} In view of Theorem \ref{ratmap}, it suffices to check that the rational matrix-valued function $\tau f_1(t)\tau f_2(t)$ admits the realization
$$\tau f_1(t)\tau f_2(t)=D+tC(I-tA)^{-1}B,$$
where $A,B,C,D$ are given above. This is straightforward,
since 
\begin{multline*}(I-tA)^{-1}=\begin{pmatrix} I-tA_2&0\\-tB_1C_2&I-tA_1\end{pmatrix}^{-1}\\[1ex]=\begin{pmatrix}(I-tA_2)^{-1}&0\\t(I-tA_1)^{-1}B_1C_2(I-tA_2)^{-1}& (I-tA_1)^{-1}\end{pmatrix}.\end{multline*}
\end{proof}

The question of $\odot$-invertibility is to be addressed next. As follows from Definition \ref{defprod}, a matrix-valued rational DA function $f(z)$ has a rational $\odot$-inverse if, and only if, the rational matrix-function $\tau f(t)$ is invertible and $\tau f(t)^{-1}$ has no poles in the set $\bP(\Lambda)$ of Theorem \ref{ratmap}. In this case such a $\odot$-inverse is unique:
$$f^{-\odot}(z)=\tau^{-1}((\tau f)(t)^{-1}).$$ 
A more explicit expression for the $\odot$-inverse in terms of realizations is given in  the next proposition.

\begin{proposition}\label{invprop}
Let $n\in\NN.$ Let $f\in\cH(\Lambda)^{n\times n}$ be a rational DA function
 with a realization
$$f(z)=D+C(I-zA)^{-\odot}\odot (zB).$$
Denote $$A^\times=A-BD^{-1}C.$$ If $$\det(D)\not=0\quad\text{and}\quad \bS(\Lambda)\cap\sigma(A^\times)=\emptyset,$$
then $f(z)$ has a unique rational $\odot$-inverse:
$$f^{-\odot}(z)=D^{-1}-D^{-1}C(I-zA^\times)^{-\odot}\odot(zBD^{-1}).$$
\end{proposition}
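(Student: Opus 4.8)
The plan is to transport the whole question into the classical theory of rational matrix functions of a complex variable by means of the bijection $\tau$ of Theorem \ref{ratmap}, invert there, and carry the answer back. By the discussion following Definition \ref{defprod}, the function $f(z)$ has a rational $\odot$-inverse precisely when the rational matrix function $\tau f(t)$ is invertible and $(\tau f)(t)^{-1}$ has no poles in the set $\bP(\Lambda)$; in that case the $\odot$-inverse is unique and equals $\tau^{-1}\big((\tau f)(t)^{-1}\big)$. So the real content is to compute $(\tau f)(t)^{-1}$, to locate its poles, and to recognize the result as $\tau$ of the asserted formula.

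First I would apply the realization identity \eqref{realreal} to write $W(t):=\tau f(t)=D+tC(I-tA)^{-1}B$. Since $W(0)=D$ and $\det(D)\neq 0$, the scalar rational function $\det W(t)$ is not identically zero, so $W(t)$ is invertible as a rational matrix function. The standard state-space inversion formula of linear system theory then gives
\[
W(t)^{-1}=D^{-1}-tD^{-1}C(I-tA^{\times})^{-1}BD^{-1},\qquad A^{\times}=A-BD^{-1}C,
\]
which one checks by direct multiplication. Under the substitution $\lambda=1/t$ this is exactly the classical identity whose inverse has state matrix $A^{\times}$; keeping track of the slightly nonstandard convention $tC(I-tA)^{-1}B$ in \eqref{realreal} is the only point of care in this computation, which is otherwise routine.

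Next I would track the poles. Read as a realization, the right-hand side above can have poles only where $I-tA^{\times}$ is singular, that is, at $t=1/s$ for $s\in\sigma(A^{\times})$; at $t=0$ its value is the finite matrix $D^{-1}$. Since $\bP(\Lambda)=\{t:t=0\text{ or }1/t\in\bS(\Lambda)\}$, a nonzero pole can lie in $\bP(\Lambda)$ only if some $s\in\sigma(A^{\times})$ belongs to $\bS(\Lambda)$, and the hypothesis $\bS(\Lambda)\cap\sigma(A^{\times})=\emptyset$ rules this out. Hence $W(t)^{-1}$ has no poles in $\bP(\Lambda)$ and lies in the range of $\tau$. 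The same hypothesis, via Theorem \ref{resolve}, makes $(I-zA^{\times})^{-\odot}$ well defined, so that $g(z):=D^{-1}-D^{-1}C(I-zA^{\times})^{-\odot}\odot(zBD^{-1})$ is a genuine rational DA function in the sense of Definition \ref{defrat}. Applying \eqref{realreal} to $g$ yields $\tau g(t)=W(t)^{-1}$, whence $g=\tau^{-1}\big(W(t)^{-1}\big)=f^{-\odot}$, giving the claimed formula; uniqueness is inherited from the injectivity of $\tau$.

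The single hypothesis $\bS(\Lambda)\cap\sigma(A^{\times})=\emptyset$ thus does double duty, and keeping these two roles distinct is the main thing to watch: it is simultaneously the condition that makes the $\odot$-resolvent $(I-zA^{\times})^{-\odot}$ exist (Theorem \ref{resolve}) and the condition that prevents $(\tau f)^{-1}$ from acquiring poles in $\bP(\Lambda)$. I expect no serious obstacle beyond correctly matching these two conditions through the reciprocal correspondence $t\leftrightarrow 1/t$ between $\bP(\Lambda)$ and $\bS(\Lambda)$.
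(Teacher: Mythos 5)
Your proof is correct, and it succeeds where the paper is terse: the paper's entire proof is that the statement ``follows from Proposition \ref{prodprop} by straightforward computation.'' The intended route there is to form the candidate inverse $g(z)=D^{-1}-D^{-1}C(I-zA^\times)^{-\odot}\odot(zBD^{-1})$, apply the product-realization formula of Proposition \ref{prodprop} to $f\odot g$ and $g\odot f$, and check---using $BD^{-1}C=A-A^\times$---that every coefficient $C_{\mathrm{prod}}A_{\mathrm{prod}}^{k}B_{\mathrm{prod}}$ of the resulting block realization vanishes, so the product is the constant $I$. You instead invert $\tau f$ in the classical domain via the state-space inversion formula $W(t)^{-1}=D^{-1}-tD^{-1}C(I-tA^{\times})^{-1}BD^{-1}$ and pull back through $\tau$. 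Since Definition \ref{defprod} and Proposition \ref{prodprop} are themselves formulated through $\tau$, both arguments come down to the same algebraic identity in the classical realization calculus; the multiplication you check directly is exactly what the paper's block computation encodes. Your route has two advantages worth noting: the pole-location bookkeeping (that $(\tau f)^{-1}$ has no poles in $\bP(\Lambda)$ and hence lies in the range of $\tau$, which is what ``unique rational $\odot$-inverse'' requires by the discussion preceding the proposition) is made explicit rather than being absorbed into ``straightforward computation,'' and you cleanly separate the two roles of the hypothesis $\bS(\Lambda)\cap\sigma(A^\times)=\emptyset$---existence of $(I-zA^{\times})^{-\odot}$ via Theorem \ref{resolve} on one hand, and absence of poles of $(\tau f)^{-1}$ in $\bP(\Lambda)$ on the other. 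What the paper's route buys in exchange is economy: it needs no classical inversion formula as a separate ingredient, only the product formula already proved.
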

\begin{proof}
The statement  follows from Proposition \ref{prodprop} by straightforward computation.
\end{proof}

The following theorem is the "discrete counterpart" of the classical definition of rationality in terms of quotients of polynomials.
\begin{theorem} \label{main} Let $f(z)$ be a matrix-valued DA function. Function $f(z)$ is rational if,  and only if,  there exists a $\CC$-valued  DA polynomial $p(z)\not\equiv 0,$ such that
$(pI\odot f)(z)$ is a DA polynomial, as well.
\end{theorem}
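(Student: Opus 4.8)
The plan is to transport the whole statement through the map $\tau$ of Theorem \ref{ratmap}, after first upgrading the interaction between $\tau$ and $\odot$ to \emph{arbitrary} DA functions. The starting observation is that for any $f\in\cH(\Lambda)$ one has $\tau(Z_+f)(t)=t\,\tau f(t)$ as a formal power series in $t$: the constant term vanishes because $(Z_+f)(0)=0$ by Theorem \ref{fwdthm}, while for $k\ge1$ Proposition \ref{bwdprop} gives $Z_-^kZ_+f=Z_-^{k-1}f$. Iterating and invoking Definition \ref{convoproddef}, this yields
$$\tau(pI\odot f)(t)=(\tau p)(t)\,\tau f(t)$$
for every scalar DA polynomial $p$ and every DA function $f$. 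The essential point is that this holds \emph{before} we know $f$ to be rational, so it is available in both directions.

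For the forward implication, I would assume $f$ rational, so that $\tau f$ is a rational matrix function with no poles in $\bP(\Lambda)$. Letting $P(t)$ be a scalar common denominator of the entries of $\tau f$ and setting $p=\tau^{-1}P$, a nonzero DA polynomial, the identity above gives $\tau(pI\odot f)=P\,\tau f$, which is a matrix polynomial; hence $pI\odot f=\tau^{-1}(P\,\tau f)$ is a DA polynomial, as required.

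For the converse, suppose $pI\odot f=q$ is a DA polynomial with $p=\sum_{n=0}^M a_nz^{(n)}\not\equiv0$. First I would reduce to $a_0=p(0)\neq0$: if $a_0=0$ then $p=z\odot p'$, and since $zI\odot g=Z_+g$ with $\ker(Z_+)=\{0\}$, the hypothesis descends to $p'$. Applying $Z_-^k$ to $\sum_n a_nZ_+^nf=q$ and using $Z_-Z_+=I$ shows that for $k>\deg q$ the functions $Z_-^kf$ satisfy the homogeneous recurrence $\sum_{n=0}^M a_nZ_-^{k-n}f=0$; with $a_0\neq0$ this forces $V:=\spa\{Z_-^jf:j\ge0\}$ to be finite-dimensional and $Z_-$-invariant. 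Every eigenvalue of $A:=Z_-|_V$ is an eigenvalue of $Z_-$ on $\cH(\Lambda)$, hence lies in $\CC\setminus\bS(\Lambda)$ by Theorem \ref{genfunthm} and Remark \ref{r0rem}, so $A$ satisfies \eqref{excl}. Writing $(Z_-^kf)(0)=\varepsilon(A^kf)$ with $\varepsilon(g)=g(0)$, the series $\tau f(t)=\varepsilon\big((I-tA)^{-1}f\big)$ rearranges into the form \eqref{realreal}, and Theorem \ref{ratmap} produces a rational DA function $\tilde f$ with $\tau\tilde f=\tau f$.

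It remains to identify $f$ with $\tilde f$, and this is where I expect the real difficulty to lie: a general DA function need not be recoverable from the data $(Z_-^kf)(0)$, so $\tau(f-\tilde f)=0$ is not automatically decisive. I would isolate the needed fact as a lemma: if $h\in\cH(\Lambda)$ has finite-dimensional span $W=\spa\{Z_-^jh:j\ge0\}$ and $(Z_-^kh)(0)=0$ for all $k$, then $h=0$. Granting it, I apply it to $h=f-\tilde f$: from $pI\odot\tilde f=q=pI\odot f$ one gets $pI\odot h=0$, whence (after the same $a_0\neq0$ reduction) $W$ is finite-dimensional, $Z_-$-invariant, and annihilated by evaluation at the origin; if $W\neq\{0\}$ it contains an eigenfunction $g\neq0$ of $Z_-$, necessarily a nonzero multiple of some $e_\mu$ by Theorem \ref{genfunthm}, contradicting $g(0)=0$ since $e_\mu(0)=1$. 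Hence $h=0$ and $f=\tilde f$ is rational. The crux of the argument is thus this uniqueness step, powered by the explicit eigenfunctions $e_\mu$ of the backward shift.
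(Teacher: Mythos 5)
Your proof is correct, but it takes a genuinely different route from the paper's, so it is worth spelling out the difference. The forward direction is the same in both (push through $\tau$, clear denominators). For the converse, the paper fixes a nonzero scalar DA polynomial $p$ of \emph{minimal} degree with $pI\odot f$ a DA polynomial and shows by contradiction that $\tau p$ cannot vanish on $\bP(\Lambda)$: a zero at $\lambda=0$ is stripped off by applying $Z_-$, and a zero at $\lambda\neq 0$ (so $1/\lambda\in\bS(\Lambda)$) is stripped off using the fact that $\tfrac{1}{\lambda}I-Z_-$ maps each finite-dimensional space of DA polynomials of bounded degree onto itself, since $1/\lambda$ is not an eigenvalue of $Z_-$; either case contradicts minimality, so $p$ is $\odot$-invertible and $f=(p^{-\odot}I)\odot(pI\odot f)$. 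You instead work on $f$ itself: the recurrence $\sum_n a_nZ_-^{k-n}f=0$ (with $a_0\neq 0$) makes $V=\spa\{Z_-^jf : j\geq 0\}$ finite-dimensional and $Z_-$-invariant, the eigenfunction argument gives \eqref{excl} for $Z_-|_V$, and you finish through $\tau$ plus a uniqueness lemma. Two observations. First, once $V$ is known to be finite-dimensional and $Z_-$-invariant, you could simply invoke Theorem \ref{shiftcar}, whose proof in the paper is independent of Theorem \ref{main} (no circularity): its ``if'' direction builds a realization of $f$ \emph{itself} via $F(z)\odot(I-zA)=F(0)$, so your detour through $\tilde f$ and the identification $f=\tilde f$ is avoidable — in effect you have re-derived that direction of Theorem \ref{shiftcar} by hand. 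Second, your uniqueness lemma is not wasted effort: the paper's closing identity $f=(p^{-\odot}I)\odot(pI\odot f)$ tacitly requires injectivity of the operator $pI\odot\,\cdot$ on \emph{all} DA functions, not just rational ones, and that injectivity rests on precisely the fact you isolate — an eigenfunction of $Z_-$ cannot vanish at the origin, by Theorem \ref{genfunthm} and Remark \ref{r0rem}. So the paper's argument is shorter and stays entirely at the level of the polynomial $p$, never constructing a realization, while yours is longer but makes explicit a uniqueness step that the paper compresses into one line. (Only cosmetic repairs are needed in your write-up: the recurrence holds for $k\geq\max(M,\deg q+1)$ rather than $k>\deg q$, and the vector-valued case of your lemma should be handled entrywise, each nonzero entry of an eigenvector of $Z_-|_W$ being a multiple of some $e_\mu$.)
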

\begin{proof}
The "only if" part of the statement follows  immediately from Theorem \ref{ratmap}, Definition \ref{defprod} and the fact that $\tau$ maps DA polynomials to ordinary polynomials of one complex variable. As to the "if" part,   let $p(z)\not\equiv 0$ be a $\CC$-valued  DA polynomial, of the minimal possible degree, such that $(pI\odot f)(z)$
is a DA polynomial. It suffices to show that $p(z)$ has a rational $\odot$-inverse.

Suppose, to obtain a contradiction, that there exists $\lambda\in\bP(\Lambda),$ such that $\tau p(\lambda)=0.$ 
If $\lambda=0,$ then
 $p(z)=z\odot p_1(z),$ where 
 $$p_1(z)=\tau^{-1}\left(\dfrac{\tau p(t)}{t}\right)=Z_- p(z)$$ is a DA polynomial of degree $\deg(p_1)=\deg(p)-1.$
Hence  $$p_1I\odot f=Z_-(pI\odot f)$$
is also a DA polynomial, in contradiction to the minimality of $\deg(p).$ 

If $\lambda\not=0,$ then $\frac{1}{\lambda}\in\bS(\Lambda)$ and $p(z)=(z-\lambda)\odot p_1(z),$ where 
 $$p_1(z)=\tau^{-1}\left(\dfrac{\tau p(t)}{t-\lambda}\right)$$ is a DA polynomial of degree $\deg(p_1)=\deg(p)-1.$
Hence  $$pI\odot f=(Z_+-\lambda I)(p_1I\odot f)$$
and
$$\left(\dfrac{1}{\lambda}I-Z_-\right)(p_1I\odot f)=\dfrac{1}{\lambda}Z_-(pI\odot f)$$
is also a DA polynomial. Note that for every $N\in\NN$  the finite-dimensional
space $\cM_N$ of DA polynomials of degree not exceeding $N$ is invariant  under $\frac{1}{\lambda}I-Z_-.$ Moreover, since $\frac{1}{\lambda}$ is not an eigenvalue of $Z_-$ (see Remark \ref{r0rem}), the operator $\frac{1}{\lambda}I-Z_-$ maps $\cM_N$ onto itself. Therefore, $p_1I\odot f$ is a DA polynomial, yet again in contradiction to the minimality of $\deg(p).$
 
Thus $\tau p(t)$ does not vanish on the set $\bP(\Lambda).$ But then the rational function $\frac{1}{\tau p(t)}$ has no poles in $\bP(\Lambda),$ the polynomial $p(z)$ has a rational $\odot$-inverse, and, finally, $$f=(p^{-\odot}I)\odot(pI\odot f)$$
is a rational DA function.
\end{proof}
 
 Another useful characterization of DA rationality is in terms of  shift invariance. For a rational DA function $f(z),$
 $$Z_+f(z)=z\odot f(z),$$ hence the space of matrix-valued rational DA functions (of given dimensions) is forward shift invariant.
 It is also backward shift invariant, since for
 $$f(z)=D+C(I-zA)^{-\odot}\odot(zB)$$
 one has
 \begin{multline*} Z_-f(z)=C(I-zA)^{-\odot}B=CB+C(Z_+Z_-(I-zA)^{-\odot})B\\=CB+C(I-zA)^{-\odot}\odot(zAB).\end{multline*}
 
\begin{theorem} \label{shiftcar} Let $m,n\in\NN,$ and let  $f\in\cH^{m\times n}.$ DA function $f(z)$ is rational if,  and only if,  
the column space of $f(z), $ $\{f(z)w:w\in\CC^n\},$ is contained in a finite-dimensional space of $\CC^m$-valued DA functions that is $Z_-$-invariant.
\end{theorem}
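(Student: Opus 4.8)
The plan is to prove both implications by exploiting the two identities $Z_-f=C(I-zA)^{-\odot}B$ and $Z_-(I-zA)^{-\odot}=(I-zA)^{-\odot}A$ recorded just before the theorem, together with the fact (Theorem \ref{genfunthm} and Remark \ref{r0rem}) that the eigenvalues of $Z_-$ are exactly $\CC\setminus\bS(\Lambda)$ and that every eigenfunction is a scalar multiple of $e_t,$ hence does not vanish at the origin because $e_t(0)=1.$

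For the ``only if'' part, I would start from a realization $f(z)=D+C(I-zA)^{-\odot}\odot(zB)$ with $A\in\CC^{\ell\times\ell}$ satisfying \eqref{excl} and set
$$\cV=\mathrm{span}\{f(z)w:w\in\CC^n\}+\mathrm{span}\{C(I-zA)^{-\odot}v:v\in\CC^\ell\}.$$
This is a finite-dimensional space of $\CC^m$-valued DA functions (of dimension at most $n+\ell$) that plainly contains the column space of $f.$ Its $Z_-$-invariance is immediate from the two identities above: $Z_-(f(z)w)=C(I-zA)^{-\odot}(Bw)$ lands in the second summand, and $Z_-(C(I-zA)^{-\odot}v)=C(I-zA)^{-\odot}(Av)$ stays there.

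For the ``if'' part, let $\cV$ be a finite-dimensional $Z_-$-invariant space of $\CC^m$-valued DA functions containing the columns of $f,$ put $T=Z_-|_\cV,$ and let $q(t)=\sum_{j=0}^\ell q_jt^j$ (with $q_\ell=1,$ $\ell=\dim\cV$) be its characteristic polynomial. Since each column $fe_i$ lies in $\cV,$ Cayley--Hamilton gives $q(Z_-)f=\sum_j q_jZ_-^jf=0.$ Writing $c_k:=Z_-^kf(0)\in\CC^{m\times n}$ and applying $Z_-^k$ before evaluating at $0$ yields the constant-coefficient recurrence $\sum_{j=0}^\ell q_jc_{k+j}=0$ for all $k\geq0.$ Hence the matrix generating function $\phi(t)=\sum_{k\geq0}c_kt^k$ is rational: multiplying by the reversed polynomial $r(t)=\sum_{j=0}^\ell q_jt^{\ell-j}=t^\ell q(1/t)$ annihilates all coefficients of degree $\geq\ell,$ so $r(t)\phi(t)$ is a matrix polynomial and $\phi=r^{-1}\cdot(\text{polynomial}).$ The poles of $\phi$ lie among the zeros of $r,$ that is, among $\{1/\mu:\mu\text{ a nonzero eigenvalue of }T\};$ since eigenvalues of $T$ are eigenvalues of $Z_-,$ hence lie in $\CC\setminus\bS(\Lambda),$ no such $1/\mu$ belongs to $\bP(\Lambda).$ By the surjectivity in Theorem \ref{ratmap} there is a rational DA function $\tilde f$ with $\tau\tilde f=\phi,$ equivalently $Z_-^k\tilde f(0)=c_k$ for all $k.$

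It remains to identify $f$ with $\tilde f,$ and this injectivity step is the crux. By the ``only if'' part already proved, $\tilde f$ too has its columns in some finite-dimensional $Z_-$-invariant space $\cV',$ so $g:=f-\tilde f$ has all its columns in the finite-dimensional $Z_-$-invariant space $\cV+\cV'$ while satisfying $Z_-^kg(0)=0$ for every $k.$ I claim such a $g$ vanishes. Fixing a column $h_0:=ge_i$ and forming the $Z_-$-cyclic space $\cW=\mathrm{span}\{Z_-^kh_0:k\geq0\},$ one gets a finite-dimensional $Z_-$-invariant space every element of which evaluates to $0$ at the origin. If $\cW\neq\{0\},$ then $Z_-|_\cW$ has an eigenvector $h\in\cW\setminus\{0\}$ with $Z_-h=t_0h;$ since $h\neq0,$ $t_0$ is an eigenvalue of $Z_-$ and thus $t_0\in\CC\setminus\bS(\Lambda),$ so Theorem \ref{genfunthm} forces each nonzero component of $h$ to be a scalar multiple of $e_{t_0},$ i.e. $h=v\,e_{t_0}$ with $v\in\CC^m\setminus\{0\},$ whence $h(0)=v\neq0,$ contradicting $h\in\cW.$ Therefore $\cW=\{0\},$ every column of $g$ vanishes, and $f=\tilde f$ is rational. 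The main obstacle is exactly this last lemma: the naive assignment $f\mapsto(Z_-^kf(0))_k$ is not injective on all of $\cH(\Lambda),$ and it is only the interplay of finite-dimensional $Z_-$-invariance with the nonvanishing of the eigenfunctions $e_t$ at the origin that rescues injectivity here.
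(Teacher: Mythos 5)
Your proof is correct, and your ``only if'' half is essentially the paper's: the paper spans $\{Z_-^kf(z)w\}$ and quotes Cayley--Hamilton, while you take the explicit space spanned by the columns of $f$ and of $C(I-zA)^{-\odot}$, which is the same idea (and even avoids Cayley--Hamilton there). Your ``if'' direction, however, is a genuinely different route. The paper works directly on the invariant space: it picks a basis $f_1,\dots,f_N$ of $\cM$, assembles $F(z)=\begin{pmatrix}f_1(z)&\cdots&f_N(z)\end{pmatrix}$, writes $Z_-F=FA$ for some $A\in\CC^{N\times N}$, rules out $\sigma(A)\cap\bS(\Lambda)\neq\emptyset$ by the same eigenvector contradiction you use (a point of $\bS(\Lambda)$ is not an eigenvalue of $Z_-$, so an eigenvector $w$ of $A$ would force $F(z)w\equiv 0$, contradicting the linear independence of the basis), and then solves $F(z)-F(0)=Z_+Z_-F(z)=F(z)\odot(zA)$ to get $F(z)=F(0)(I-zA)^{-\odot}$ and hence the explicit realization $f=FB=CB+C(I-zA)^{-\odot}\odot(zAB)$ with $C=F(0)$; no appeal to Theorem \ref{ratmap} is made. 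You instead pass through the coefficient sequence $c_k=Z_-^kf(0)$: Cayley--Hamilton yields a linear recurrence, the generating function $\phi$ is rational with no poles in $\bP(\Lambda)$, surjectivity of $\tau$ produces a rational $\tilde f$ with the same coefficients, and you must then prove a uniqueness lemma --- that $\tau$ is injective on functions whose columns lie in a finite-dimensional $Z_-$-invariant space --- which requires the one-dimensionality of the eigenspaces and $e_t(0)=1$ from Theorem \ref{genfunthm}, facts the paper's proof never invokes. Your route is longer and indirect, but it isolates a statement of independent interest (a restricted injectivity of $\tau$, which, as you rightly observe, fails on all of $\cH(\Lambda)$); the paper's route is shorter and delivers an explicit realization of $f$ rather than rationality by identification.
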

\begin{proof}
The "only if" part of the statement follows  from the Caley - Hamilton theorem, since for
 $$f(z)=D+C(I-zA)^{-\odot}\odot(zB)$$
 one has 
$$Z_-^kf(z)=C(I-zA)^{-\odot}A^{k-1}B\quad\forall k\in\NN$$ and, therefore,
$$\dim\spa_\CC\{Z_-^kf(z)w:k\in\ZZ_+,w\in\CC^n\}<\infty.$$

As to the "if" part,
let $\cM$ be a finite-dimensional subspace of $\cH(\Lambda)^m,$ such that
$$\{ f(z)w:w\in\CC^n\}\subset \cM\quad\text{and}\quad Z_-\cM\subset\cM.$$
Choose a basis $f_1(z), f_2(z),\dots, f_N(z)$ of $\cM$ and consider
the 
 $\CC^{m\times N}$-valued DA function
$$F(z):=\begin{pmatrix} f_1(z)&f_2(z)&\cdots& f_N(z)\end{pmatrix},.$$
Then there exists a matrix $A\in\CC^{N\times N},$ such that
$$Z_- F(z)=F(z)A$$
Suppose, to obtain a contradiction, that $A$ has an eigenvalue $t\in\bS(\Lambda).$
Then 
$$(tI-Z_-)F(z)=F(z)(tI_{N}- A).$$
 If $w$ is an eigenvector of $A$ associated with the eigenvalue $t,$ then   $F(z)w=0$ because $t$ is not an eigenvalue of $Z_-$ (se Remark \ref{r0rem}). This contradicts the linear independence of the columns of $F(z).$ Thus $A$ satisfies \eqref{excl}. Furthermore, one has
 \begin{gather*}F(z)-F(0)=Z_+Z_-F(z)=F(z)\odot(zA),\\
 F(z)\odot(I-zA)=F(0),\\
 F(z)=C(I-zA)^{-\odot},
 \end{gather*}
 where $C=F(0).$ Since the column space of $f(z)$ is contained in $\cM,$ there exists a matrix $B\in\CC{N\times n},$ such that
 $$f(z)=F(z)B=C(I-zA)^{-\odot}B=CB+C(I-zA)^{-\odot}\odot(zAB).$$
 \end{proof}
It remains to provide a non-trivial example of a rational DA function.
\begin{example}
  \label{rk-ex}
 Let $w\in\bV(\Lambda)$ be fixed and let $M>1.$ Consider the DA function 
 $$K_w(z)=\sum_{n=0}^\infty \dfrac{z^{(n)}\overline{w^{(n)}}}{M^n}$$
 (in view of \eqref{cauchy}, the series converges absolutely for every $z\in\bV(\Lambda)$ to a function that is DA by Theorem \ref{CR}). 
 If $K_w(z)$ were a rational DA function, then
 $$\tau K_w(t)=\sum_{n=0}^\infty \dfrac{t^n\overline{w^{(n)}}}{M^n}=\overline{e_{\bar t/M}(w)},$$
 which, in view of \eqref{genfun}, is a rational function of the complex variable $t$ and has no poles in $\bP(\Lambda).$
Taking into account the fact that $e_0(w)=1,$ one may conclude that there exist matrices $A,B,C$ satisfying \eqref{excl} and such that
 $$\overline{e_{\bar t/M}(w)}=1+tC(I-tA)^{-1}B.$$
In particular,
$$\dfrac{\overline{w^{(n)}}}{M^n}=CA^{n-1}B\quad \forall n\in\NN.$$
Hence, rational or not, function $K_w(z)$ can be represented as
$$K_w(z)=K_w(z)=1+\sum_{n=1}^\infty z^{(n)}CA^{n-1}B.$$
Observe that
$$Z_-^mK_w(z)=\sum_{n=0}^\infty z^{(n)}CA^{n+m-1}B\quad\forall m\in\NN.$$
By the Caley-Hamilton theorem, there is $N\in\NN$ and $a_0,\dots, a_{N-1}\in\CC$ such that
$$A^N=\sum_{j=0}^{N-1}a_jA^j,$$
hence
$$Z_-^{N+1}K_w(z)=\sum_{j=1}^{N}a_ {j-1}Z_-^jK_w(z).$$
Let 
$$\cM=\spa_\CC\{Z_-^jK_w(z):j=0,1,\dots,N\}.$$
Then $\cM$ is a $Z_-$-invariant subspace of $\cH(\Lambda)$ of dimension at most $N+1$ that contains $K_w(z).$  By Theorem \ref{shiftcar},  $K_w(z)$ is indeed a  rational DA function,
$$K_w(z)=\tau^{-1}\left(\overline{e_{\bar t/m}(w)}\right).$$
\end{example}

\section*{Acknowledgments}
Daniel Alpay thanks the Foster G. and Mary McGaw Professorship in Mathematical Sciences, which supported this research. Zubayir Kazi, Mariana Tecalero, and Dan Volok thank National Science Foundation for support (grant \# 2243854) and Kansas State University for hosting the participants in its Mathematics REU (SUMaR) in the summer of 2023.

\bibliographystyle{plain}

\end{document}